\numberwithin{equation}{section}
\newtheorem{thm}{Theorem}[section]
\newtheorem{theorem}[thm]{Theorem}
\newtheorem{lemma}[thm]{Lemma}
\newtheorem{corollary}[thm]{Corollary}
\begin{document}

\setcounter{page}{1}

\title[Reciprocity formulas]{Some reciprocity formulas for generalized Dedekind-Rademacher sums}
\thanks{2020 Mathematics Subject Classification. 11B68; 11F20; 11M35.}\thanks{Keywords. Bernoulli functions; Dedekind sums; Hurwitz and Lerch zeta functions; Reciprocity formulas}
\author{Yuan He}
\address{Data Recovery Key Laboratory of Sichuan Province, College of Mathematics and Information Science, Neijiang Normal University, Neijiang 641100, Sichuan, People's Republic of China}
\email{hyyhe@njtc.edu.cn, hyyhe@aliyun.com}
\author{Yong-Guo Shi}
\address{Data Recovery Key Laboratory of Sichuan Province, College of Mathematics and Information Science, Neijiang Normal University, Neijiang 641100, Sichuan, People's Republic of China}
\email{scumat@163.com}

\begin{abstract}
In this paper, we study the generalized Dedekind-Rademacher sums considered by Hall, Wilson and Zagier. We establish a formula for the products of two Bernoulli functions. The proof relies on Parseval's formula, Hurwitz's formula, and Lerch's functional equation. The result leads to reciprocity formulas for some generalizations of the classical Dedekind sums. In particular, it is shown that Carlitz's, Berndt's, Hall and Wilson's reciprocity theorems are deduced as special cases.
\end{abstract}

\maketitle

\section{Introduction}

Let $\mathbb{N}$ be the set of positive integers, $\mathbb{N}_{0}$ the set of non-negative integers, $\mathbb{Z}$ the set of integers, $\mathbb{R}$ the set of real numbers, and $\mathbb{C}$ the set of complex numbers. Let $\{x\}$ be the fractional part of $x\in\mathbb{R}$, and denote by $((x))$ the sawtooth function (also called the first Bernoulli function) given for $x\in\mathbb{R}$ by
\begin{equation*}
((x))=\begin{cases}
\{x\}-\frac{1}{2},  &\text{if $x\in\mathbb{R}\setminus\mathbb{Z}$},\\
0,  &\text{if $x\in\mathbb{Z}$}.
\end{cases}
\end{equation*}
While studying the transformation formula of the Dedekind eta function, Dedekind \cite{dedekind} in the 1880's naturally arrived at the classical Dedekind sums $s(a,b)$ defined for $a,b\in\mathbb{Z}$ with $b\not=0$ by
\begin{equation}\label{eq1.1}
s(a,b)=\sum_{r=1}^{|b|}\biggl(\biggl(\frac{r}{b}\biggl)\biggl)\biggl(\biggl(\frac{ar}{b}\biggl)\biggl),
\end{equation}
and used the transformation formula to discover his reciprocity theorem: If $a,b$ are two relatively prime positive integers then
\begin{equation}\label{eq1.2}
s(a,b)+s(b,a)=-\frac{1}{4}+\frac{1}{12}\biggl(\frac{a}{b}+\frac{1}{ab}+\frac{b}{a}\biggl).
\end{equation}
Interest in the classical Dedekind sums and their generalizations has since been aroused by numerous mathematicians from different areas such as number theory, algebraic, geometry, topology and so on (see, for example, \cite{beck1,beck2,gunnells,rademacher3,yau,zagier,zhang}). One of the most intriguing and important features for these sums is the reciprocity formula. In the present paper, we will be concerned with the generalizations for Dedekind's reciprocity theorem in the direction of the Bernoulli functions. It is well known that Rademacher \cite{rademacher1} in 1954 generalized \eqref{eq1.2}, and proved his three-term reciprocity theorem: If $a,b,c$ are three relatively prime positive integers, and if $a',b',c'\in\mathbb{Z}$ satisfy $aa'\equiv 1$ (mod $bc$), $bb'\equiv 1$ (mod $ca$), $cc'\equiv 1$ (mod $ab$), then
\begin{equation}\label{eq1.3}
s(bc',a)+s(ca',b)+s(ab',c)=-\frac{1}{4}+\frac{1}{12}\biggl(\frac{a}{bc}+\frac{b}{ca}+\frac{c}{ab}\biggl).
\end{equation}
Dieter \cite{dieter} in 1957 improved the restrictions on $a',b',c'$, and gave a proof of \eqref{eq1.3} in the case when $a',b',c'\in\mathbb{Z}$ such that $aa'\equiv 1$ (mod $b$), $bb'\equiv 1$ (mod $c$), $cc'\equiv 1$ (mod $a$). Another way of generalizing \eqref{eq1.2} was also invented by Rademacher \cite{rademacher2}, who, in 1964, introduced the so-called Dedekind-Rademacher sums $s(a,b;x,y)$ given for $a,b\in\mathbb{Z},x,y\in\mathbb{R}$ with $b\not=0$ by
\begin{equation}\label{eq1.4}
s(a,b;x,y)=\sum_{r=1}^{|b|}\biggl(\biggl(\frac{r+y}{b}\biggl)\biggl)\biggl(\biggl(\frac{a(r+y)}{b}+x\biggl)\biggl),
\end{equation}
and shown that if $a,b$ are two relatively prime positive integers then for $x,y\in\mathbb{R}$,
\begin{eqnarray}\label{eq1.5}
&&s(a,b;x,y)+s(b,a;y,x)\nonumber\\
&&=-\frac{\delta_{\mathbb{Z}}(x)\delta_{\mathbb{Z}}(y)}{4}+((x))((y))+\frac{a\overline{B}_{2}(y)}{2b}+\frac{b\overline{B}_{2}(x)}{2a}+\frac{\overline{B}_{2}(ay+bx)}{2ab},
\end{eqnarray}
where $\delta_{\mathbb{Z}}(x)=1$ or $0$ according to $x\in\mathbb{Z}$ or $x\not\in\mathbb{Z}$, $\overline{B}_{2}(x)$ is the second Bernoulli function given for $x\in\mathbb{R}$ by $\overline{B}_{2}(x)=B_{2}(\{x\})$ with $B_{2}(x)=x^{2}-x+\frac{1}{6}$ being the Bernoulli polynomials of degree $2$.
In the year 1977, Berndt \cite{berndt3} systematically studied the generalizations of the classical Dedekind sums that involve the first Bernoulli function, and demonstrated reciprocity theorems for various types of Dedekind sums as well as providing some new proofs of Rademacher's reciprocity formulas \eqref{eq1.3} and \eqref{eq1.5}. In particular, Berndt \cite[Theorem 4.3]{berndt3} explored the sums $s(a,b,c;x,y,z)$ given for $a,b,c\in\mathbb{Z},x,y,z\in\mathbb{R}$ with $c\not=0$ by
\begin{equation}\label{eq1.6}
s(a,b,c;x,y,z)=\sum_{r=1}^{|c|}\biggl(\biggl(\frac{a(r+z)}{c}-x\biggl)\biggl)\biggl(\biggl(\frac{b(r+z)}{c}-y\biggl)\biggl),
\end{equation}
and discovered that for $a,b,c\in\mathbb{N},x,y,z\in\mathbb{R}$,
\begin{eqnarray}\label{eq1.7}
&&s(a,b,c;x,y,z)+s(b,c,a;y,z,x)+s(c,a,b;z,x,y)\nonumber\\
&&=-\frac{N}{4}+\frac{c(a,b)^{2}\overline{B}_{2}\bigl(\frac{ay-bx}{(a,b)}\bigl)}{2ab}+\frac{a(b,c)^{2}\overline{B}_{2}\bigl(\frac{bz-cy}{(b,c)}\bigl)}{2bc}\nonumber\\
&&\quad+\frac{b(c,a)^{2}\overline{B}_{2}\bigl(\frac{cx-az}{(c,a)}\bigl)}{2ca},
\end{eqnarray}
where $(a,b)$ denotes the greatest common factor of $a,b\in\mathbb{Z}$, $N$ is the number of distinct triples $r,s,t\in\mathbb{Z}$ such that
\begin{equation*}
0\leq\frac{r+x}{a}=\frac{s+y}{b}=\frac{t+z}{c}<1.
\end{equation*}
Obviously, Berndt's reciprocity formula \eqref{eq1.7} extends and unifies Rademacher's reciprocity formulas \eqref{eq1.3} and \eqref{eq1.5}. We here mention that Berndt and Evans \cite{berndt4} have used \eqref{eq1.5} to prove some formulas for the class number of an imaginary quadratic number field.

We now turn to another generalizations of the classical Dedekind sums that involve the higher order Bernoulli functions. Let $B_{n}(x)$ be the Bernoulli polynomial of degree $n$, and $\overline{B}_{n}(x)$ the $n$-th Bernoulli function given for $n\in\mathbb{N}_{0}$, $x\in\mathbb{R}$ by
\begin{equation}\label{eq1.8}
\overline{B}_{0}(x)=1, \quad\overline{B}_{1}(x)=((x)),\quad\overline{B}_{n}(x)=B_{n}(\{x\})\quad(n\geq2).
\end{equation}
In the year 1950, Apostol \cite{apostol1} introduced the generalized Dedekind sums $s_{n}(a,b)$ defined for $n\in\mathbb{N}$, $a,b\in\mathbb{Z}$ with $b\not=0$ by
\begin{equation}\label{eq1.9}
s_{n}(a,b)=\sum_{r=1}^{|b|}\overline{B}_{1}\biggl(\frac{r}{b}\biggl)\overline{B}_{n}\biggl(\frac{ar}{b}\biggl),
\end{equation}
and obtained his reciprocity theorem: If $a,b$ are two relatively prime positive integers then for an odd positive integer $n$,
\begin{eqnarray}\label{eq1.10}
&&ab^{n}s_{n}(a,b)+ba^{n}s_{n}(b,a)\nonumber\\
&&=\frac{1}{n+1}\sum_{j=0}^{n+1}\binom{n+1}{j}(-1)^{j}a^{j}b^{n+1-j}B_{j}B_{n+1-j}+\frac{nB_{n+1}}{n+1},
\end{eqnarray}
where $B_{j}$ is the $j$-th Bernoulli number given by $B_{j}=B_{j}(0)$. In order to extend the reciprocity formulas \eqref{eq1.5} and \eqref{eq1.10}, Carlitz \cite{carlitz2} in 1964 considered the sums $\widetilde{s}_{n}(a,b;x,y)$ defined for $n\in\mathbb{N}_{0}$, $a,b\in\mathbb{Z}$, $x,y\in\mathbb{R}$ with $b\not=0$ by
\begin{equation}\label{eq1.11}
\widetilde{s}_{n}(a,b;x,y)=\sum_{r=1}^{|b|}B_{1}\biggl(\biggl\{\frac{r+y}{b}\biggl\}\biggl)B_{n}\biggl(\biggl\{\frac{a(r+y)}{b}+x\biggl\}\biggl),
\end{equation}
and found that if $a,b$ are two relatively prime positive integers then for $n\in\mathbb{N}_{0}$, $x,y\in\mathbb{R}$,
\begin{eqnarray}\label{eq1.12}
&&ab^{n}\widetilde{s}_{n}(a,b;x,y)+ba^{n}\widetilde{s}_{n}(b,a;y,x)\nonumber\\
&&=\frac{1}{n+1}\sum_{j=0}^{n+1}\binom{n+1}{j}a^{j}b^{n+1-j}B_{j}(\{y\})B_{n+1-j}(\{x\})\nonumber\\
&&\quad+\frac{nB_{n+1}(\{ay+bx\})}{n+1}.
\end{eqnarray}
The sums \eqref{eq1.11} were further extended by Hall, Wilson and Zagier \cite{hall2} to the following generalized Dedekind-Rademacher sums
\begin{equation}\label{eq1.13}
s_{m,n}\left(
\begin{matrix}
a & b & c \\
x & y & z
\end{matrix}
\right)=\sum_{r=1}^{|c|}\overline{B}_{m}\biggl(\frac{a(r+z)}{c}-x\biggl)\overline{B}_{n}\biggl(\frac{b(r+z)}{c}-y\biggl),
\end{equation}
where $m,n\in\mathbb{N}_{0}$, $a,b,c\in\mathbb{Z}$, $x,y,z\in\mathbb{R}$ with $c\not=0$. In particular, Hall, Wilson and Zagier \cite{hall2} proved that the generating function of the sums \eqref{eq1.13} satisfies the following three-term reciprocity formula
\begin{eqnarray}\label{eq1.14}
&&\Omega\left(
\begin{matrix}
a & b & c \\
x & y & z \\
X & Y & Z
\end{matrix}
\right)+\Omega\left(
\begin{matrix}
b & c & a \\
y & z & x \\
Y & Z & X
\end{matrix}
\right)+\Omega\left(
\begin{matrix}
c & a & b \\
z & x & y \\
Z & X & Y
\end{matrix}
\right)\nonumber\\
&&=\begin{cases}
-\frac{1}{4},  &\text{if $(x,y,z)\in(a,b,c)\mathbb{R}+\mathbb{Z}^{3}$},\\
0,  &\text{otherwise},
\end{cases}
\end{eqnarray}
where $a,b,c\in\mathbb{N}$ with no common factor, $x,y,z\in\mathbb{R}$, $X,Y,Z$ are three nonzero variables such that $X+Y+Z=0$,
\begin{equation*}
\Omega\left(
\begin{matrix}
a & b & c \\
x & y & z \\
X & Y & Z
\end{matrix}
\right)=\sum_{m,n\geq0}\frac{1}{m!n!}S_{m,n}\left(
\begin{matrix}
a & b & c \\
x & y & z
\end{matrix}
\right)\biggl(\frac{X}{a}\biggl)^{m-1}\biggl(\frac{Y}{b}\biggl)^{n-1}.
\end{equation*}

A natural question is to determine whether or not there exist some another reciprocity formulas for the above-mentioned
Dedekind sums. To do so, we study the generalized Dedekind-Rademacher sums \eqref{eq1.13} from another perspective. We establish a formula of the products of two Bernoulli functions (see Theorem \ref{thm3.1} below). The proof of Theorem \ref{thm3.1} mainly relies on the manipulation of Parseval's formula, Hurwitz's formula, and Lerch's functional equation. As applications of Theorem \ref{thm3.1}, we include some of the previous results as specialisations, among others, the reciprocity formulas \eqref{eq1.7} and \eqref{eq1.12}, and Carlitz's \cite{carlitz4,carlitz5}, Hall and Wilson's \cite{hall1} reciprocity theorems.

This paper is organized as follows. In the second section, we give some auxiliary results. In the third section, we provide the detailed proof of the formula of the products of two Bernoulli functions, and present some special cases as well as immediate consequences of the result. The fourth section concentrates on the feature that has contributed to some more general reciprocity formulas for the generalized Dedekind-Rademacher sums \eqref{eq1.13}.

\section{Some auxiliary results}

For convenience, in the following we always denote by $\mathrm{i}$ the square root of $-1$ such that $\mathrm{i}^{2}=-1$, $\Gamma(s)$ the gamma function defined on $s\in\mathbb{C}$, $[x]$ the floor function (also called the greatest integer function) defined for $x\in\mathbb{R}$ by
\begin{equation*}
[x]=x-\{x\},
\end{equation*}
and $\binom{\alpha}{k}$ the binomial coefficients defined for $k\in\mathbb{Z}$, $\alpha\in\mathbb{C}$ by $\binom{\alpha}{0}=1$ and
\begin{equation*}
\binom{\alpha}{-k}=0,\quad\binom{\alpha}{k}=\frac{\alpha(\alpha-1)(\alpha-2)\cdots(\alpha-k+1)}{k!}\quad(k\geq1).
\end{equation*}
We also write, for $l,k\in\mathbb{N}_{0}$, $\delta_{l,k}$ as the Kronecker delta function given by $\delta_{l,k}=1$ or $0$ according to $l=k$ or $l\not=k$; for $x\in\mathbb{R}$, $\mathrm{sgn}(x)$ as the sign of $x\in\mathbb{R}$ given by
\begin{equation*}
\mathrm{sgn}(x)=\begin{cases}
0,  &\text{if $x=0$},\\
\frac{x}{|x|},  &\text{if $x\not=0$};
\end{cases}
\end{equation*}
for a complex-valued function $f(z)$ on $\mathbb{C}$, $\operatorname{res}_{z=z_{0}}f(z)$ as the residue of $f(z)$ at $z=z_{0}$. For the sake of convergence, the sum
\begin{equation*}
\sum_{d=-\infty}^{+\infty}\frac{1}{d+a}\quad(a\not\in\mathbb{Z})
\end{equation*}
is interpreted as
\begin{equation*}
\lim_{N\rightarrow\infty}\sum_{d=-N}^{N}\frac{1}{d+a}.
\end{equation*}
This ensures that the Bernoulli function $\overline{B}_{n}(x)$ defined in \eqref{eq1.8} can be given for $n\in\mathbb{N}$, $x\in\mathbb{R}$ by the Fourier series (see, e.g., \cite[Theorem 12.19]{apostol2} or \cite[Equation (2)]{hall2}),
\begin{equation}\label{eq2.1}
\overline{B}_{n}(x)=-\frac{n!}{(2\pi\mathrm{i})^{n}}\sideset{}{'}\sum_{k=-\infty}^{+\infty}\frac{e^{2\pi\mathrm{i}kx}}{k^{n}},
\end{equation}
where the dash denotes throughout that undefined terms are excluded from the sum. We now state the following auxiliary results.

\begin{lemma}\label{lem2.1} (Parseval's formula) Suppose that $F(\theta)$ and $G(\theta)$ are two Riemann integrable, complex-valued functions on $\mathbb{R}$ of period $2\pi$ with the Fourier series
\begin{equation*}
F(\theta)=\sum_{n=-\infty}^{+\infty}a_{n}e^{\mathrm{i}n\theta},
\end{equation*}
and
\begin{equation*}
G(\theta)=\sum_{n=-\infty}^{+\infty}b_{n}e^{\mathrm{i}n\theta}.
\end{equation*}
Then
\begin{equation}\label{eq2.2}
\sum_{n=-\infty}^{+\infty}a_{n}\overline{b_{n}}=\frac{1}{2\pi}\int_{0}^{2\pi}F(\theta)\overline{G(\theta)}d\theta,
\end{equation}
where the horizontal bars indicate complex conjugation.
\end{lemma}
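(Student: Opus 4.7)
The plan is to establish \eqref{eq2.2} by combining the Fourier coefficient recovery formula with orthogonality of the exponentials, and to justify the resulting interchange of summation and integration through a truncation argument anchored on Bessel's inequality. First I would record the classical identities
\[
a_{n}=\frac{1}{2\pi}\int_{0}^{2\pi}F(\theta)e^{-\mathrm{i}n\theta}d\theta,\qquad \overline{b_{n}}=\frac{1}{2\pi}\int_{0}^{2\pi}\overline{G(\theta)}e^{\mathrm{i}n\theta}d\theta,
\]
which are obtained from the defining Fourier expansions of $F$ and $G$ together with the orthogonality relation $\frac{1}{2\pi}\int_{0}^{2\pi}e^{\mathrm{i}(m-n)\theta}d\theta=\delta_{m,n}$.

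Next I would introduce the symmetric partial sum $F_{N}(\theta)=\sum_{|n|\le N}a_{n}e^{\mathrm{i}n\theta}$. Multiplying $F_{N}$ by $\overline{G(\theta)}$, integrating term by term, and interchanging the \emph{finite} sum with the integral yields
\[
\frac{1}{2\pi}\int_{0}^{2\pi}F_{N}(\theta)\overline{G(\theta)}d\theta=\sum_{|n|\le N}a_{n}\overline{b_{n}}.
\]
It remains to let $N\to\infty$ on both sides. Bessel's inequality $\sum_{n}|a_{n}|^{2}\le\frac{1}{2\pi}\int_{0}^{2\pi}|F(\theta)|^{2}d\theta$, together with the analogous bound for $\{b_{n}\}$ and the Cauchy--Schwarz inequality, guarantees that $\sum_{n}a_{n}\overline{b_{n}}$ converges absolutely. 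For the integral on the left, the $L^{2}$-completeness of the trigonometric system for Riemann integrable periodic functions gives $\|F-F_{N}\|_{L^{2}}\to 0$, whence one more application of Cauchy--Schwarz,
\[
\biggl|\frac{1}{2\pi}\int_{0}^{2\pi}(F(\theta)-F_{N}(\theta))\overline{G(\theta)}d\theta\biggr|\le\|F-F_{N}\|_{L^{2}}\|G\|_{L^{2}},
\]
delivers the required convergence of the integral, and assembling the two limits yields \eqref{eq2.2}.

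The only substantive obstacle is the $L^{2}$-convergence of the symmetric Fourier partial sums to $F$ under the bare hypothesis of Riemann integrability. This is a standard theorem of Fourier analysis, proved by approximating $F$ in $L^{2}$ by continuous periodic functions, invoking Weierstrass's trigonometric approximation theorem (or equivalently Fej\'er's theorem on Ces\`aro means) for each such approximant, and then passing back to $F$ through Bessel's inequality. In the written-up proof I would simply cite this classical fact, for instance from \cite{apostol2}, rather than reconstruct the underlying argument.
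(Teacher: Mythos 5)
The paper does not prove this lemma at all; it simply cites \cite[Proposition 3.1.10]{cohen} and \cite[p.~81]{stein}, and your argument is precisely the standard proof contained in those references (truncate, use orthogonality on the finite sum, then pass to the limit via Bessel's inequality, Cauchy--Schwarz, and the mean-square convergence of Fourier series for Riemann integrable periodic functions). Your sketch is correct, and you rightly isolate the $L^{2}$-convergence of the partial sums as the one nontrivial ingredient to be cited rather than reproved.
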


\begin{proof}
See \cite[Proposition 3.1.10]{cohen} or \cite[p. 81]{stein} for details.
\end{proof}

\begin{lemma}\label{lem2.2} Let $m,n\in\mathbb{N}$, $d,x,y\in\mathbb{R}$ with $d\not=x\not=y$. Then
\begin{eqnarray}\label{eq2.3}
\frac{1}{(d-x)^{m}(d-y)^{n}}&=&\sum_{j=1}^{m}\binom{m+n-j-1}{n-1}\frac{(-1)^{m-j}}{(x-y)^{m+n-j}(d-x)^{j}}\nonumber\\
&&+\sum_{j=1}^{n}\binom{m+n-j-1}{m-1}\frac{(-1)^{n-j}}{(y-x)^{m+n-j}(d-y)^{j}}.
\end{eqnarray}
\end{lemma}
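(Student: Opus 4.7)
The plan is to recognize \eqref{eq2.3} as the partial fraction decomposition, in the variable $d$, of the rational function
\[
f(d) := \frac{1}{(d-x)^{m}(d-y)^{n}},
\]
which has a pole of order $m$ at $d=x$, a pole of order $n$ at $d=y$, no other singularities, and vanishes as $d\to\infty$. Standard rational-function theory therefore guarantees a unique expansion
\[
f(d) = \sum_{j=1}^{m}\frac{A_{j}}{(d-x)^{j}} + \sum_{j=1}^{n}\frac{B_{j}}{(d-y)^{j}},
\]
so the task reduces to computing the coefficients $A_{j}$ and $B_{j}$ explicitly as functions of $x$ and $y$.

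To extract $A_{j}$, I will multiply both sides by $(d-x)^{m}$, apply the differential operator $\frac{1}{(m-j)!}(\partial/\partial d)^{m-j}$, and evaluate at $d=x$. A short Leibniz-rule check shows that among the $A_{k}$-terms only $k=j$ survives, since $(d-x)^{m-k}$ either vanishes after differentiation-and-evaluation when $k<j$ or is killed by over-differentiation when $k>j$; meanwhile every $B_{k}$-term disappears because the factor $(d-x)^{m}$ is differentiated fewer than $m$ times before being evaluated at $d=x$. This yields the residue-type formula
\[
A_{j} = \frac{1}{(m-j)!}\left[\left(\frac{\partial}{\partial d}\right)^{m-j}\frac{1}{(d-y)^{n}}\right]_{d=x}.
\]
Combined with the elementary identity
\[
\left(\frac{\partial}{\partial d}\right)^{k}\frac{1}{(d-y)^{n}} = \frac{(-1)^{k}(n+k-1)!}{(n-1)!\,(d-y)^{n+k}},
\]
proved by an immediate induction on $k$, the substitution $d=x$ delivers $A_{j} = \binom{m+n-j-1}{n-1}(-1)^{m-j}/(x-y)^{m+n-j}$, matching the first sum in \eqref{eq2.3}. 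The coefficients $B_{j}$ arise by the obvious symmetry $(m,x)\leftrightarrow(n,y)$, producing the second sum.

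There is essentially no obstacle of analytical content; the only pitfall is the bookkeeping in the coefficient-extraction step, namely verifying that the cross terms in the expansion truly vanish at $d=x$ (resp.\ $d=y$) and then reorganising the resulting factorial expression $(m+n-j-1)!/[(m-j)!\,(n-1)!]$ into the binomial coefficient $\binom{m+n-j-1}{n-1}$ with the correct sign $(-1)^{m-j}$ versus $(-1)^{n-j}$.
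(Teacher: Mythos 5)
Your proposal is correct and follows essentially the same route as the paper: both start from the existence of the partial fraction decomposition and extract the coefficients by multiplying through by $(d-x)^{m}$ (resp.\ $(d-y)^{n}$), differentiating $m-j$ (resp.\ $n-j$) times, and evaluating at the pole, then using the elementary derivative formula for $(d-y)^{-n}$ to obtain the binomial coefficients. The only cosmetic difference is that the paper phrases the vanishing of the cross terms via a Taylor-expansion-with-remainder argument rather than your Leibniz-rule bookkeeping.
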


\begin{proof}
By the partial fraction decomposition, we have
\begin{equation}\label{eq2.4}
\frac{1}{(d-x)^{m}(d-y)^{n}}
=\sum_{j=1}^{m}\frac{M_{m-j}}{(d-x)^{j}}+\sum_{j=1}^{n}\frac{N_{n-j}}{(d-y)^{j}}.
\end{equation}
If we multiply both sides of \eqref{eq2.4} by $(d-y)^{n}$ then we have
\begin{equation}\label{eq2.5}
\frac{1}{(d-x)^{m}}=N_{n-1}(d-y)^{n-1}+\cdots+N_{1}(d-y)+N_{0}+O\bigl((d-y)^{n}\bigl).
\end{equation}
Trivially, taking $d\rightarrow y$ in \eqref{eq2.5} gives
\begin{equation*}
N_{0}=\frac{1}{(y-x)^{m}}.
\end{equation*}
If we make the $(n-j)$-th derivation on both sides of \eqref{eq2.5} with respect to $d$, and let $d\rightarrow y$, then we get that for $j\in\mathbb{N}$ with $1\leq j\leq n$,
\begin{eqnarray}\label{eq2.6}
N_{n-j}&=&\frac{1}{(n-j)!}\frac{\partial^{n-j}}{\partial d^{n-j}}(d-x)^{-m}\biggl|_{d=y}\nonumber\\
&=&(-1)^{n-j}\binom{m+n-j-1}{m-1}\frac{1}{(y-x)^{m+n-j}}.
\end{eqnarray}
Similarly, multiplying both sides of \eqref{eq2.4} by $(d-x)^{m}$ gives
\begin{equation}\label{eq2.7}
\frac{1}{(d-y)^{n}}=M_{m-1}(d-x)^{m-1}+\cdots+M_{1}(d-x)+M_{0}+O\bigl((d-x)^{m}\bigl).
\end{equation}
Hence, by making the $(m-j)$-th derivation on both sides of \eqref{eq2.7} with respect to $d$, and taking $d\rightarrow x$, we see that for $j\in\mathbb{N}$ with $1\leq j\leq m$,
\begin{eqnarray}\label{eq2.8}
M_{m-j}&=&\frac{1}{(m-j)!}\frac{\partial^{m-j}}{\partial d^{m-j}}(d-y)^{-n}\biggl|_{d=x}\nonumber\\
&=&(-1)^{m-j}\binom{m+n-j-1}{n-1}\frac{1}{(x-y)^{m+n-j}}.
\end{eqnarray}
Inserting \eqref{eq2.6} and \eqref{eq2.8} into \eqref{eq2.4}, the desired result follows immediately.
\end{proof}

\begin{lemma}\label{lem2.3} Let $q,j\in\mathbb{N}$ with $q\geq2$, and let $\theta_{r}$ be a real-valued function defined on $r\in\mathbb{N}$ such that $\theta_{r}\not=0,\pm q,\pm 2q,\ldots$. Then
\begin{equation}\label{eq2.9}
\frac{\partial^{j-1}}{\partial a^{j-1}}\bigl(\cot(\pi a)\bigl)\biggl|_{a=\frac{\theta_{r}}{q}}=\frac{\delta_{1,j}}{\mathrm{i}}+2^{j}\pi^{j-1}\mathrm{i}^{j-2}F\biggl(\frac{\theta_{r}}{q},1-j\biggl),
\end{equation}
where $F(x,s)$ is the periodic zeta function given for $x\in\mathbb{R}$, $s\in\mathbb{C}$ by 
\begin{equation*}
F(x,s)=\sum_{n=1}^{\infty}\frac{e^{2\pi \mathrm{i}nx}}{n^{s}}\quad(\Re(s)>1).
\end{equation*}
\end{lemma}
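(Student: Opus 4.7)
The strategy is to differentiate term by term the classical exponential Fourier-type expansion of the cotangent and to match the resulting series with the periodic zeta function through its analytic continuation in the $s$-variable.

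First, I would start from the identity
$$\cot(\pi a)=\mathrm{i}\,\frac{e^{2\pi\mathrm{i}a}+1}{e^{2\pi\mathrm{i}a}-1}=-\mathrm{i}-2\mathrm{i}\sum_{n=1}^{\infty}e^{2\pi\mathrm{i}na},$$
which follows by expanding $1/(e^{2\pi\mathrm{i}a}-1)$ as a geometric series and converges absolutely and uniformly on compact subsets of the upper half plane $\{a\in\mathbb{C}:\Im(a)>0\}$.

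Second, I would differentiate $j-1$ times term by term in that region. The constant $-\mathrm{i}$ contributes only when $j=1$, producing exactly the term $\delta_{1,j}/\mathrm{i}$ on the right-hand side. From the remaining sum one obtains the scalar factor $-2\mathrm{i}(2\pi\mathrm{i})^{j-1}$, which simplifies to $2^{j}\pi^{j-1}\mathrm{i}^{j-2}$ via $\mathrm{i}^{j}=-\mathrm{i}^{j-2}$, together with the series $\sum_{n\geq 1}n^{j-1}e^{2\pi\mathrm{i}na}$. For $\Im(a)>0$ this series converges for every complex $s$ when one writes $n^{j-1}=n^{-(1-j)}$, so it provides the analytic continuation in $s$ of $F(a,s)=\sum_{n\geq 1}e^{2\pi\mathrm{i}na}/n^{s}$ from $\Re(s)>1$ down to $s=1-j$; by uniqueness of analytic continuation this series is $F(a,1-j)$.

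Third, I would descend to the real point $a=\theta_{r}/q$. By the hypothesis $\theta_{r}\notin q\mathbb{Z}$, $\cot(\pi a)$ and all its derivatives are analytic at $a=\theta_{r}/q$. On the right-hand side, the value $F(x,1-j)$ for real non-integer $x$ is given in closed form via Hurwitz's formula (equivalently, Lerch's functional equation) as a combination of Hurwitz zeta values at argument $j$, where the defining series converges; this shows that $F(\,\cdot\,,1-j)$ extends continuously across the real axis away from integers. The identity therefore passes from $\Im(a)>0$ to $a=\theta_{r}/q$ by continuity.

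The main obstacle is this final descent: for $j\geq 2$ the series $\sum n^{j-1}e^{2\pi\mathrm{i}na}$ diverges on the real axis, so one cannot literally evaluate the termwise-differentiated expansion at $a=\theta_{r}/q$. One must instead identify its Abel-type boundary value from the upper half plane with the analytically continued $F(\theta_{r}/q,1-j)$ furnished by Hurwitz's formula; every other step is a routine manipulation of the geometric series and a short algebraic simplification of the power of $\mathrm{i}$.
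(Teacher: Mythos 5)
Your argument is sound, and it is worth noting that the paper offers no proof of this lemma at all -- it simply cites \cite{he}, Equation (2.28) -- so yours is a genuine reconstruction rather than a variant of an in-text argument. The expansion $\cot(\pi a)=-\mathrm{i}-2\mathrm{i}\sum_{n\ge1}e^{2\pi\mathrm{i}na}$ for $\Im(a)>0$, termwise differentiation, and the simplification $-2\mathrm{i}(2\pi\mathrm{i})^{j-1}=2^{j}\pi^{j-1}\mathrm{i}^{j-2}$ are all correct (I checked $j=1$ and $j=2$ against $\cot(\pi x)$ and $-\pi\csc^{2}(\pi x)$ directly), and you correctly isolate the one nontrivial point: for $j\ge2$ the differentiated series diverges on the real axis, so you must identify the boundary value of $\mathrm{Li}_{1-j}(e^{2\pi\mathrm{i}a})$ as $\Im(a)\to0^{+}$ with the continuation of $F(x,s)$ in the $s$-variable at $s=1-j$. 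That identification does go through: since $\mathrm{Li}_{1-j}(w)=(w\,d/dw)^{j-1}\bigl(w/(1-w)\bigr)$ is a rational function regular away from $w=1$, the boundary value is just evaluation at $w=e^{2\pi\mathrm{i}x}\ne1$, and Lerch's functional equation (Lemma \ref{lem2.6} with $a=1$, using $\phi(\pm1,w,s)=\zeta(s,w)$ and $F(x,s)=e^{2\pi\mathrm{i}x}\phi(x,1,s)$) gives $F(x,1-j)=\frac{(j-1)!}{(2\pi)^{j}}\bigl(e^{\pi\mathrm{i}j/2}\zeta(j,\{x\})+e^{-\pi\mathrm{i}j/2}\zeta(j,1-\{x\})\bigr)$, which matches. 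An alternative that avoids the upper half-plane entirely, and is more in the spirit of the machinery the paper assembles around this lemma, is to start instead from the partial-fraction expansion \eqref{eq2.11}--\eqref{eq2.12}, write $\sum_{d}(d+x)^{-j}=\zeta(j,\{x\})+(-1)^{j}\zeta(j,1-\{x\})$ for $j\ge2$, and convert to $F(x,1-j)$ by the same Lerch identity; the two routes are equivalent, but the real-axis version never manipulates a divergent series. Either way, your proof is correct once the boundary-value identification is written out as above.
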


\begin{proof}
See \cite[Equation (2.28)]{he} for details.
\end{proof}

\begin{lemma}\label{lem2.4} Let $j\in\mathbb{N}$, $b,r\in\mathbb{Z}$ with $b\not=0$. Then
\begin{equation}\label{eq2.10}
\sideset{}{'}\sum_{d=-\infty}^{+\infty}\frac{1}{\bigl(d+\frac{r}{b}\bigl)^{j}}
=\frac{(-1)^{j-1}(2\pi\mathrm{i})^{j}\mathrm{sgn}(b)}{j!b^{1-j}}\sum_{l=1}^{|b|}e^{\frac{2\pi \mathrm{i}lr}{b}}\overline{B}_{j}\biggl(\frac{l}{b}\biggl).
\end{equation}
\end{lemma}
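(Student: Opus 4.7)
The plan is to start from the right-hand side of \eqref{eq2.10} and collapse it, via Fourier analysis and character orthogonality, to the left-hand side. Substituting the Fourier representation \eqref{eq2.1} for $\overline{B}_{j}(l/b)$ and exchanging the finite sum over $l\in\{1,\ldots,|b|\}$ with the dashed sum over $k\in\mathbb{Z}$, the right-hand side of \eqref{eq2.10} equals
\begin{equation*}
\frac{(-1)^{j-1}(2\pi\mathrm{i})^{j}\mathrm{sgn}(b)}{j!b^{1-j}}\cdot\biggl(-\frac{j!}{(2\pi\mathrm{i})^{j}}\biggr)\sideset{}{'}\sum_{k=-\infty}^{+\infty}\frac{1}{k^{j}}\sum_{l=1}^{|b|}e^{2\pi\mathrm{i}l(k+r)/b}.
\end{equation*}
The interchange is unproblematic because the inner $l$-sum is finite: one swaps within the truncation $|k|\leq N$ first and then passes to the limit $N\to\infty$.

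Next, the innermost geometric sum over $l$ evaluates to $|b|$ when $b\mid(k+r)$ and to $0$ otherwise. Consequently only the terms $k=m|b|-r$ with $m\in\mathbb{Z}$ survive, and a short simplification using $\mathrm{sgn}(b)\cdot|b|=b$ reduces the right-hand side to
\begin{equation*}
(-b)^{j}\sideset{}{'}\sum_{m=-\infty}^{+\infty}\frac{1}{(m|b|-r)^{j}}.
\end{equation*}

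On the other side, writing $d+r/b=(db+r)/b$ and substituting $m=-d$ when $b>0$, or $m=d$ when $b<0$, turns $db+r$ into $-(m|b|-r)$ in both cases. Combining the $b^{j}$ from clearing denominators with the $(-1)^{j}$ from the sign flip yields precisely the same factor $(-b)^{j}$, so the left-hand side agrees with the expression just obtained, completing the proof.

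The main technical point to monitor is the conditional convergence when $j=1$. After the character-orthogonality reduction, the symmetric cutoff $|k|\leq N$ translates into the condition $m|b|-r\in[-N,N]$, which deviates from a symmetric cutoff in $m$ only by $O(1)$ boundary terms of size $O(1/N)$. This discrepancy vanishes as $N\to\infty$, so the two prescribed principal-value interpretations genuinely coincide and the formal manipulations above are legitimate.
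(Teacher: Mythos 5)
Your proof is correct, and it takes a genuinely different route from the paper's. You work backwards from the right-hand side: expand $\overline{B}_{j}(l/b)$ by the Fourier series \eqref{eq2.1}, swap the finite $l$-sum with the (truncated) $k$-sum, and use orthogonality of the additive characters $l\mapsto e^{2\pi\mathrm{i}l(k+r)/b}$ to isolate the arithmetic progression $k\equiv -r\ (\mathrm{mod}\ |b|)$, after which a change of variable recovers the left-hand side; your closing remark about the $O(1/N)$ discrepancy between the symmetric cutoff in $k$ and the induced cutoff in $m$ is exactly the point that needs to be checked for $j=1$, and it is handled correctly (note also that for $j=1$ the series \eqref{eq2.1} converges to $((x))=0$ at integers, which matches the paper's convention $\overline{B}_{1}=((\cdot))$, so substituting \eqref{eq2.1} at $l=|b|$ is legitimate). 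The paper instead works forwards from the left-hand side: it uses the partial-fraction expansion of the cotangent, Lemma \ref{lem2.3} to express the resulting derivative in terms of the periodic zeta function $F(r/b,1-j)$, the Hurwitz-zeta special value \eqref{eq2.14} to convert that into Bernoulli polynomials, and then a separate case analysis for $b\mid r$ (via $\zeta(2j)$ and $B_{2j+1}=0$) versus $b\nmid r$. Your argument is more elementary and uniform --- it needs no special functions beyond \eqref{eq2.1}, which the paper already cites and uses elsewhere, and it avoids the two-case split entirely --- while the paper's route has the advantage of setting up the machinery ($F(x,s)$, Hurwitz's formula, Lerch's equation) that is reused in the harder Lemma \ref{lem2.7}, where your character-orthogonality shortcut would not apply as directly because of the extra factor $e^{2\pi\mathrm{i}dx}$.
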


\begin{proof}
We first consider the case $b\nmid r$. Since $\cot(a)$ has the following expression in partial fractions (see, e.g., \cite[p. 75]{abramowitz} or \cite[p. 327]{remmert})
\begin{eqnarray}\label{eq2.11}
\cot(a)&=&\frac{1}{a}+2a\sum_{d=1}^{\infty}\frac{1}{a^{2}-d^{2}\pi^{2}}\nonumber\\
&=&\sum_{d=-\infty}^{+\infty}\frac{1}{a+d\pi}\quad(a\not=0,\pm \pi,\pm 2\pi,\ldots),
\end{eqnarray}
we discover from \eqref{eq2.11} that for $j\in\mathbb{N}$,
\begin{equation}\label{eq2.12}
\frac{\partial^{j-1}}{\partial a^{j-1}}\bigl(\cot(\pi a)\bigl)\biggl|_{a=\frac{r}{b}}=\frac{(-1)^{j-1}(j-1)!}{\pi}\sum_{d=-\infty}^{+\infty}\frac{1}{\bigl(d+\frac{r}{b}\bigl)^{j}}.
\end{equation}
Taking $\theta_{r}=qr/b$ in Lemma \ref{lem2.3}, and it then follows from \eqref{eq2.12} that for $j\in\mathbb{N}$,
\begin{equation}\label{eq2.13}
\sum_{d=-\infty}^{+\infty}\frac{1}{\bigl(d+\frac{r}{b}\bigl)^{j}}=-\delta_{1,j}\pi\mathrm{i}+\frac{(-1)^{j}(2\pi\mathrm{i})^{j}}{(j-1)!}F\biggl(\frac{r}{b},1-j\biggl).
\end{equation}
Notice that for $j\in\mathbb{N}_{0}$ (see, e.g., \cite[Theorem 12.13]{apostol2}),
\begin{equation}\label{eq2.14}
\zeta(-j,x)=-\frac{B_{j+1}(x)}{j+1},
\end{equation}
where $\zeta(s,x)$ is the Hurwitz zeta function given for $s\in\mathbb{C}$, $x\in\mathbb{R}$ with $0<x\leq1$ by
\begin{equation*}
\zeta(s,x)=\sum_{n=0}^{\infty}\frac{1}{(n+x)^{s}}\quad(\Re(s)>1).
\end{equation*}
And from \eqref{eq2.1} and $\overline{B}_{0}(x)=1$ we have
\begin{equation}\label{eq2.15}
\overline{B}_{j}(-x)=(-1)^{j}\overline{B}_{j}(x)\quad(j\in\mathbb{N}_{0},x\in\mathbb{R}).
\end{equation}
Therefore, we obtain from the familiar division algorithm stated in \cite[Theorem 1.14]{apostol2}, $B_{j}(1)=B_{j}+\delta_{1,j}$ for $j\in\mathbb{N}_{0}$ described in \cite[Theorem 12.14]{apostol2}, \eqref{eq2.14} and \eqref{eq2.15} that for $j\in\mathbb{N}$,
\begin{eqnarray}\label{eq2.16}
F\biggl(\frac{r}{b},1-j\biggl)&=&\sum_{l=1}^{|b|}\sum_{m=0}^{\infty}\frac{e^{\frac{2\pi \mathrm{i}(m|b|+l)r}{b}}}{(m|b|+l)^{1-j}}\nonumber\\
&=&-\frac{1}{j|b|^{1-j}}\sum_{l=1}^{|b|}e^{\frac{2\pi \mathrm{i}lr}{b}}B_{j}\biggl(\frac{l}{|b|}\biggl)\nonumber\\
&=&-\frac{1}{j|b|^{1-j}}\sum_{l=1}^{|b|}e^{\frac{2\pi \mathrm{i}lr}{b}}\biggl(\overline{B}_{j}\biggl(\frac{l}{|b|}\biggl)+\frac{1}{2}\delta_{1,j}\delta_{l,|b|}\biggl)\nonumber\\
&=&-\frac{\mathrm{sgn}(b)}{jb^{1-j}}\sum_{l=1}^{|b|}e^{\frac{2\pi \mathrm{i}lr}{b}}\overline{B}_{j}\biggl(\frac{l}{b}\biggl)-\frac{1}{2}\delta_{1,j}.
\end{eqnarray}
Inserting \eqref{eq2.16} into \eqref{eq2.13}, we conclude that Lemma \ref{lem2.4} holds true for the case $b\nmid r$. We next consider the case $b\mid r$. Obviously, in this case, for $j\in\mathbb{N}$,
\begin{equation}\label{eq2.17}
\sideset{}{'}\sum_{d=-\infty}^{+\infty}\frac{1}{\bigl(d+\frac{r}{b}\bigl)^{j}}=
\begin{cases}
2\zeta(j),  &2\mid j,\\
0,  &2\nmid j.
\end{cases}
\end{equation}
We know from \cite[Theorems 12.16 and 12.17]{apostol2} that for $j\in\mathbb{N}$,
\begin{equation}\label{eq2.18}
B_{2j+1}=0,
\end{equation}
and
\begin{equation}\label{eq2.19}
\zeta(2j)=\frac{(-1)^{j+1}2^{2j-1}\pi^{2j}B_{2j}}{(2j)!}.
\end{equation}
By applying \eqref{eq2.18} and \eqref{eq2.19} to the right hand side of \eqref{eq2.17}, we see that for $j\in\mathbb{N}$,
\begin{eqnarray}\label{eq2.20}
\sideset{}{'}\sum_{d=-\infty}^{+\infty}\frac{1}{\bigl(d+\frac{r}{b}\bigl)^{j}}&=&-\delta_{1,j}\pi\mathrm{i}-\frac{(2\pi\mathrm{i})^{j}B_{j}}{j!}\nonumber\\
&=&-\frac{(2\pi\mathrm{i})^{j}\overline{B}_{j}(0)}{j!}.
\end{eqnarray}
It is well known that the Bernoulli function satisfy the so-called ``distribution property'' (Raabe's formula \cite{raabe})
\begin{equation}\label{eq2.21}
\overline{B}_{j}(qx)=q^{j-1}\sum_{l=0}^{q-1}\overline{B}_{j}\biggl(x+\frac{l}{q}\biggl),
\end{equation}
where $q\in\mathbb{N}$, $j\in\mathbb{N}_{0}$, $x\in\mathbb{R}$. A direct application of \eqref{eq2.15} and \eqref{eq2.21} says that for $j\in\mathbb{N}_{0}$, $b\in\mathbb{Z}$, $x\in\mathbb{R}$ with $b\not=0$,
\begin{eqnarray}\label{eq2.22}
\sum_{l=1}^{|b|}\overline{B}_{j}\biggl(\frac{l-x}{b}\biggl)&=&\sum_{l=0}^{|b|-1}\overline{B}_{j}\biggl(\frac{|b|-l-x}{b}\biggl)\nonumber\\
&=&(-1)^{j}b^{1-j}\mathrm{sgn}(b)\overline{B}_{j}(x).
\end{eqnarray}
Thus, by applying the case $x=0$ in \eqref{eq2.22} to the right hand side of \eqref{eq2.20}, we see that Lemma \ref{lem2.4} holds true for the case $b\mid r$. This completes the proof of Lemma \ref{lem2.4}.
\end{proof}

\begin{lemma}\label{lem2.5}(Hurwitz's formula) If $0<x\leq1$ and $\Re(s)>1$, or if $0<x<1$ and $\Re(s)>0$, then
\begin{equation}\label{eq2.23}
\zeta(1-s,x)=\frac{\Gamma(s)}{(2\pi)^{s}}\bigl(e^{-\frac{\pi \mathrm{i}s}{2}}F(x,s)+e^{\frac{\pi \mathrm{i}s}{2}}F(-x,s)\bigl),
\end{equation}
where $F(x,s)$ is as in \eqref{eq2.9}, $\zeta(x,s)$ is as in \eqref{eq2.14}.
\end{lemma}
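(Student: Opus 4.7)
The plan is to prove Hurwitz's formula by the classical contour-integration argument in the style of Riemann's derivation of the functional equation for $\zeta(s)$. I would begin with the Hankel loop integral
\begin{equation*}
I(s,x):=\int_{C}\frac{z^{s-1}e^{-xz}}{1-e^{-z}}\,dz,
\end{equation*}
where $C$ runs from $+\infty$ along the upper edge of the positive real axis to a small circle of radius $\varepsilon<2\pi$ around the origin, loops counterclockwise around $0$, and returns along the lower edge to $+\infty$, with the branch of $z^{s-1}$ fixed by $0<\arg z<2\pi$. Splitting $C$ into its three pieces, letting $\varepsilon\to 0$, and comparing with the well-known representation
\begin{equation*}
\Gamma(s)\zeta(s,x)=\int_{0}^{\infty}\frac{t^{s-1}e^{-xt}}{1-e^{-t}}\,dt\quad(\Re(s)>1,\,0<x\le 1),
\end{equation*}
would yield $I(s,x)=(e^{2\pi\mathrm{i}s}-1)\Gamma(s)\zeta(s,x)$, initially only for $\Re(s)>1$. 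Since the factor $e^{-xz}$ secures convergence at $+\infty$ whenever $x>0$, the integral $I(s,x)$ is in fact entire in $s$ for fixed $x\in(0,1]$, so this identity extends analytically to all $s\in\mathbb{C}$; substituting $s\mapsto 1-s$ rewrites it as an expression for $\zeta(1-s,x)$ in terms of $I(1-s,x)$.

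Next I would evaluate $I(1-s,x)$ by deforming $C$ across the simple poles of the integrand at $z=2\pi\mathrm{i}n$ for $n\ne 0$. Specifically, I replace the inner part of $C$ by a square $S_{N}$ of half-side $(2N+1)\pi$ centred at the origin; on such squares the elementary uniform bound $|1-e^{-z}|^{-1}=O(1)$ combined with the decay of $|z^{-s}e^{-xz}|$ forces the integral over $S_{N}$ to tend to $0$ as $N\to\infty$ precisely under the stated hypotheses on $(x,s)$. The residue at $z=2\pi\mathrm{i}n$ equals $(2\pi\mathrm{i}n)^{-s}e^{-2\pi\mathrm{i}nx}$, and with the chosen branch $(2\pi\mathrm{i}n)^{-s}=(2\pi n)^{-s}e^{-\pi\mathrm{i}s/2}$ for $n>0$, while $(-2\pi\mathrm{i}m)^{-s}=(2\pi m)^{-s}e^{-3\pi\mathrm{i}s/2}$ for $m>0$. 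Summing over $n$ then yields
\begin{equation*}
I(1-s,x)=-2\pi\mathrm{i}(2\pi)^{-s}\bigl(e^{-\pi\mathrm{i}s/2}F(-x,s)+e^{-3\pi\mathrm{i}s/2}F(x,s)\bigr),
\end{equation*}
with $F(x,s)$ as in Lemma \ref{lem2.3}.

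Combining the two steps, the prefactor $(e^{-2\pi\mathrm{i}s}-1)\Gamma(1-s)$ produced in the first step is simplified via $e^{-2\pi\mathrm{i}s}-1=-2\mathrm{i}e^{-\pi\mathrm{i}s}\sin(\pi s)$ together with Euler's reflection $\Gamma(s)\Gamma(1-s)\sin(\pi s)=\pi$; the resulting factor $e^{\pi\mathrm{i}s}$ converts $e^{-\pi\mathrm{i}s/2}$ and $e^{-3\pi\mathrm{i}s/2}$ into $e^{\pi\mathrm{i}s/2}$ and $e^{-\pi\mathrm{i}s/2}$ respectively, and the claimed identity
\begin{equation*}
\zeta(1-s,x)=\frac{\Gamma(s)}{(2\pi)^{s}}\bigl(e^{-\pi\mathrm{i}s/2}F(x,s)+e^{\pi\mathrm{i}s/2}F(-x,s)\bigr)
\end{equation*}
drops out. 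The main technical obstacle is the careful branch-tracking of $z^{-s}$ around the Hankel contour together with the decay estimate on the enlarging squares $S_{N}$; the precise admissible $(x,s)$-range listed in the statement is exactly what this boundary estimate dictates, and keeping the $\pm\pi\mathrm{i}s/2$ factors consistently bookkept through the residue sum and the reflection identities is the only delicate part of the calculation.
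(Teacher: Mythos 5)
Your argument is correct: it is the classical Hankel-contour proof of Hurwitz's formula (loop integral, analytic continuation via $I(s,x)=(e^{2\pi\mathrm{i}s}-1)\Gamma(s)\zeta(s,x)$, residues at $z=2\pi\mathrm{i}n$ on expanding squares, and Euler's reflection formula), and the branch bookkeeping and final simplification check out, e.g.\ against the Riemann functional equation at $x=1$. The paper gives no proof of its own but simply cites \cite[Theorem 12.6]{apostol2}, whose proof is exactly the argument you sketch, so your route coincides with the one the paper relies on.
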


\begin{proof}
See \cite[Theorem 12.6]{apostol2} for details.
\end{proof}

\begin{lemma}\label{lem2.6}(Lerch's functional equation) If $0<x<1$ and $0<a\leq1$ then for all $s\in\mathbb{C}$,
\begin{equation}\label{eq2.24}
\phi(x,a,1-s)=\frac{\Gamma(s)}{(2\pi)^{s}}\bigl(e^{\frac{\pi \mathrm{i}s}{2}-2\pi\mathrm{i}ax}\phi(-a,x,s)+e^{-\frac{\pi \mathrm{i}s}{2}+2\pi\mathrm{i}a(1-x)}\phi(a,1-x,s)\bigl),
\end{equation}
where $\phi(x,a,s)$ is the Lerch zeta function given for $x,a\in\mathbb{R}$, $s\in\mathbb{C}$ with $0<a\leq1$ by
\begin{equation*}
\phi(x,a,s)=\sum_{n=0}^{\infty}\frac{e^{2\pi \mathrm{i}nx}}{(n+a)^{s}}\quad(\Re(s)>1).
\end{equation*}
\end{lemma}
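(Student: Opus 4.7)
The plan is to prove Lerch's functional equation by the classical Riemann-type contour integration argument, adapted from the Hurwitz setting to the Lerch setting. The starting point is the Mellin representation: for $\Re(s) > 1$, $0 < x < 1$ and $0 < a \leq 1$, substituting $\Gamma(s)(n+a)^{-s} = \int_{0}^{\infty} t^{s-1}e^{-(n+a)t}\,dt$ into $\phi(x,a,s)$ and summing the geometric series in $n$ yields
\[
\Gamma(s)\phi(x,a,s) = \int_{0}^{\infty} \frac{t^{s-1}e^{-at}}{1 - e^{-t+2\pi\mathrm{i}x}}\,dt,
\]
where convergence at $t=0$ follows from $e^{2\pi\mathrm{i}x} \neq 1$.

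Next, I would lift this to a Hankel-type loop integral. Let $C$ be the contour coming in from $+\infty$ above the real axis, encircling $0$ once counterclockwise, and returning to $+\infty$ below, and let $(-z)^{s-1} = e^{(s-1)\log(-z)}$ with $\log(-z)$ real for $z<0$. Parametrizing the two tails and using $e^{\mp \pi\mathrm{i}(s-1)}$ on the two sides converts the Mellin representation into
\[
\phi(x,a,s) = \frac{\Gamma(1-s)}{2\pi\mathrm{i}} \int_{C} \frac{(-z)^{s-1}e^{-az}}{1-e^{-z+2\pi\mathrm{i}x}}\,dz,
\]
which supplies the meromorphic continuation of $\phi(x,a,s)$ to all $s\in\mathbb{C}$.

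To derive the functional equation I would deform $C$ outward to a sequence of circles $C_N$ of radius $(2N+1)\pi$ centered at the origin, applying the residue theorem to sweep up the simple poles of the integrand at $z_k = 2\pi\mathrm{i}(x+k)$, $k\in\mathbb{Z}$. For $\Re(s)$ sufficiently negative the integral over $C_N$ vanishes as $N\to\infty$, leaving $\phi(x,a,s)$ as a sum over all $k$ of residues $(-2\pi\mathrm{i}(x+k))^{s-1}e^{-2\pi\mathrm{i}a(x+k)}$. Splitting according to the sign of $x+k$ assigns $\arg(-z_k) = -\pi/2$ for $k \geq 0$ and $\arg(-z_k) = +\pi/2$ for $k\leq -1$, producing exponential prefactors $e^{\mp\pi\mathrm{i}(s-1)/2}$. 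The $k\geq 0$ terms reassemble into $(2\pi)^{s-1}e^{-\pi\mathrm{i}(s-1)/2}e^{-2\pi\mathrm{i}ax}\phi(-a,x,1-s)$, and reindexing $k = -(n+1)$ for $k \leq -1$ turns the second sum into $(2\pi)^{s-1}e^{\pi\mathrm{i}(s-1)/2}e^{2\pi\mathrm{i}a(1-x)}\phi(a,1-x,1-s)$. Substituting $s\mapsto 1-s$ and simplifying the phase factors produces the claimed identity.

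The main obstacle is the clean bookkeeping of branches and orientation: one must verify carefully that the integral over $C_N$ really does vanish (which requires choosing $R_N$ so as to keep $|1-e^{-z+2\pi\mathrm{i}x}|$ bounded away from zero on these circles), track the phases introduced by the two values of $\arg(-z_k)$, and check that after using $\Gamma(s)\Gamma(1-s) = \pi/\sin(\pi s)$ and performing the substitution $s\mapsto 1-s$, the prefactors $e^{\pm\pi\mathrm{i}(s-1)/2}$ become exactly $e^{\pm\pi\mathrm{i}s/2}$ as demanded by the stated formula. Once these sign and branch issues are settled, the remaining manipulations are routine series rearrangements.
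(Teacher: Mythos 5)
The paper does not prove this lemma at all: it simply cites Lerch's original 1887 paper and Berndt's two later proofs, treating the functional equation as a known classical result. Your outline is a correct reconstruction of the classical (essentially Lerch's own) argument: the Mellin representation, the Hankel loop giving the continuation in $s$, expansion of the contour to circles of radius comparable to $(2N+1)\pi$ (adjusted so the poles $z_k=2\pi\mathrm{i}(x+k)$ stay at bounded distance, which is where the hypothesis $0<x<1$ enters to keep $e^{2\pi\mathrm{i}x}\neq1$), and the residue sum split by the sign of $x+k$; I checked that the arguments $\mp\pi/2$ you assign to $-z_k$ give prefactors $e^{\mp\pi\mathrm{i}(s-1)/2}$ which, after the substitution $s\mapsto1-s$, become exactly the $e^{\pm\pi\mathrm{i}s/2}$ of the stated formula, and the reindexing $k=-(n+1)$ produces $e^{2\pi\mathrm{i}a(1-x)}\phi(a,1-x,s)$ as claimed. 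The one point to be explicit about in a full write-up is that the circle estimate only forces the contour integral to vanish for $\Re(s)<0$, so the identity is first obtained there and then extended to all $s$ by analytic continuation of both sides; with that remark added, your plan is complete. Berndt's alternative proofs (via Poisson summation) would be a genuinely different route, but yours matches the source the paper actually leans on.
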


\begin{proof}
See \cite{berndt1,lerch} for details.
\end{proof}

\begin{lemma}\label{lem2.7} Let $j\in\mathbb{N}$, $b,r\in\mathbb{Z}$ with $b\not=0$. Then, for $x\in\mathbb{R}$,
\begin{equation}\label{eq2.25}
\sideset{}{'}\sum_{d=-\infty}^{+\infty}\frac{e^{2\pi \mathrm{i}dx}}{\bigl(d+\frac{r}{b}\bigl)^{j}}
=\frac{(-1)^{j-1}(2\pi\mathrm{i})^{j}\mathrm{sgn}(b)}{j!b^{1-j}}\sum_{l=1}^{|b|}e^{\frac{2\pi \mathrm{i}(l-x)r}{b}}\overline{B}_{j}\biggl(\frac{l-x}{b}\biggl).
\end{equation}
\end{lemma}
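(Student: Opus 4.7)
The plan is to split the argument according to whether $b\mid r$, and in each case reduce the left-hand side to a Bernoulli-function expression using the tools assembled in Section~2.

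\textbf{Case $b\mid r$.} Write $r=kb$. A shift $d\mapsto d-k$ converts the left-hand side into
\[
e^{-2\pi\mathrm{i}kx}\sideset{}{'}\sum_{d'=-\infty}^{+\infty}\frac{e^{2\pi\mathrm{i}d'x}}{(d')^{j}},
\]
which by the Fourier expansion \eqref{eq2.1} equals $-(2\pi\mathrm{i})^{j}e^{-2\pi\mathrm{i}kx}\overline{B}_{j}(x)/j!$. On the right-hand side the factor $e^{2\pi\mathrm{i}(l-x)k}=e^{-2\pi\mathrm{i}xk}$ comes out of the $l$-sum, and the remaining $\sum_{l=1}^{|b|}\overline{B}_{j}((l-x)/b)$ is evaluated by the multiplication identity \eqref{eq2.22} to be $(-1)^{j}b^{1-j}\mathrm{sgn}(b)\overline{B}_{j}(x)$. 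Comparing constants then yields the desired equality.

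\textbf{Case $b\nmid r$.} Write $r/b=n+a$ with $n=[r/b]$ and $a=\{r/b\}\in(0,1)$, shift $d\mapsto d-n$, and (using periodicity of $e^{2\pi\mathrm{i}dx}$ in $x$) assume $x\in(0,1)$; the boundary subcase $x\in\mathbb{Z}$ reduces straight to Lemma~\ref{lem2.4} after noting that $l\mapsto e^{2\pi\mathrm{i}(l-x)r/b}\overline{B}_{j}((l-x)/b)$ is periodic modulo $|b|$. Splitting the resulting symmetric sum into its $d\geq 0$ and $d<0$ parts identifies it as the Lerch combination
\[
\phi(x,a,j)+(-1)^{j}e^{-2\pi\mathrm{i}x}\phi(-x,1-a,j).
\]
Now apply Lerch's functional equation \eqref{eq2.24} with the substitution $(x,a,s)\mapsto(1-a,x,j)$ and use the periodicity $\phi(1-a,x,1-j)=\phi(-a,x,1-j)$ in the first argument of $\phi$; a short calculation, exploiting $i^{j}(-i)^{-j}=(-1)^{j}$, collapses the two Lerch values into a single one:
\[
\phi(x,a,j)+(-1)^{j}e^{-2\pi\mathrm{i}x}\phi(-x,1-a,j)=\frac{(2\pi\mathrm{i})^{j}e^{-2\pi\mathrm{i}xa}}{(j-1)!}\phi(-a,x,1-j).
\]

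Finally, evaluate $\phi(-a,x,1-j)$ by splitting its defining series into residue classes modulo $|b|$: since $|b|a=\mathrm{sgn}(b)\,r\in\mathbb{Z}$, the character $e^{-2\pi\mathrm{i}ma}$ depends only on $m\bmod|b|$, so the Lerch zeta decomposes as $|b|^{j-1}\sum_{l=0}^{|b|-1}e^{-2\pi\mathrm{i}lr/b}\zeta(1-j,(l+x)/|b|)$. Each Hurwitz value equals $-B_{j}((l+x)/|b|)/j$ by \eqref{eq2.14}, and a reindexing $l\mapsto|b|-l$ together with the parity rule \eqref{eq2.15} rewrites $\overline{B}_{j}((l+x)/|b|)$ as $(-1)^{j}\mathrm{sgn}(b)^{j}\overline{B}_{j}((l-x)/b)$; combining everything with the prefactor and simplifying $|b|^{j-1}\mathrm{sgn}(b)^{j}=b^{j-1}\mathrm{sgn}(b)$ produces exactly the right-hand side of \eqref{eq2.25}.

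The main obstacle will be tracking the $\mathrm{sgn}(b)$ and $|b|$ versus $b$ bookkeeping through the reindexing $l\mapsto|b|-l$ (especially when $b<0$, where $(l+x)/|b|$ and $(l-x)/b$ differ by more than a sign of the argument), and justifying the manipulation of conditionally convergent Lerch series at $j=1$, which is absorbed into the symmetric-limit convention introduced in Section~2.
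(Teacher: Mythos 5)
Your argument is correct and follows essentially the same route as the paper: the $x\in\mathbb{Z}$ case reduces to Lemma \ref{lem2.4} by periodicity modulo $|b|$, and the main case $b\nmid r$, $0<x<1$ is handled by splitting the shifted sum into the two Lerch values $\phi(x,a,j)$ and $\phi(-x,1-a,j)$, collapsing them via Lerch's functional equation with the substitution $(1-a,x,j)$, and evaluating $\phi(1-a,x,1-j)$ through the residue-class decomposition, \eqref{eq2.14}, and the reindexing $l\mapsto|b|-l$ — exactly as in \eqref{eq2.27}--\eqref{eq2.30}. The only (harmless) deviation is that for the subcase $b\mid r$, $x\notin\mathbb{Z}$ you invoke the Fourier expansion \eqref{eq2.1} directly where the paper re-derives the same identity from Hurwitz's formula; the sign and $\mathrm{sgn}(b)$ bookkeeping you flag as the main risk checks out, since $|b|^{j-1}\mathrm{sgn}(b)^{j}=b^{j-1}\mathrm{sgn}(b)$ is precisely what reconciles $\overline{B}_{j}\bigl((l-x)/|b|\bigr)$ with $\overline{B}_{j}\bigl((l-x)/b\bigr)$.
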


\begin{proof} We first consider the case $x\in\mathbb{Z}$. Since $l-x$ runs over a complete residue system modulo $|b|$ as
$l$ does, we see from Lemma \ref{lem2.4} that the case $x\in\mathbb{Z}$ in Lemma \ref{lem2.7} is complete. We next discuss the case $x\not\in\mathbb{Z}$. It suffices to prove that Lemma \ref{lem2.7} holds true in the case when $0<x<1$. It is easily seen that if $b\mid r$ then
\begin{eqnarray}\label{eq2.26}
\sideset{}{'}\sum_{d=-\infty}^{+\infty}\frac{e^{2\pi \mathrm{i}dx}}{\bigl(d+\frac{r}{b}\bigl)^{j}}
&=&e^{-\frac{2\pi \mathrm{i}rx}{b}}\biggl(\sum_{d=1}^{\infty}\frac{e^{2\pi \mathrm{i}dx}}{d^{j}}+(-1)^{j}\sum_{d=1}^{\infty}\frac{e^{-2\pi \mathrm{i}dx}}{d^{j}}\biggl)\nonumber\\
&=&e^{-\frac{2\pi \mathrm{i}rx}{b}}\bigl((F(x,j)+(-1)^{j}F(-x,j)\bigl),
\end{eqnarray}
where $F(x,s)$ is as in \eqref{eq2.9}. Taking $s=j$ in Lemma \ref{lem2.5}, and using \eqref{eq2.14}, we discover that \eqref{eq2.26} can be rewritten as
\begin{eqnarray*}
\sideset{}{'}\sum_{d=-\infty}^{+\infty}\frac{e^{2\pi \mathrm{i}dx}}{\bigl(d+\frac{r}{b}\bigl)^{j}}
&=&e^{-\frac{2\pi \mathrm{i}rx}{b}}\frac{(2\pi\mathrm{i})^{j}\zeta(1-j,x)}{(j-1)!}\\
&=&-e^{-\frac{2\pi \mathrm{i}rx}{b}}\frac{(2\pi\mathrm{i})^{j}\overline{B}_{j}(x)}{j!},
\end{eqnarray*}
from which and \eqref{eq2.22} we conclude that Lemma \ref{lem2.7} holds true in the case when $x\not\in\mathbb{Z}$ and $b\mid r$. We now consider the case $x\not\in\mathbb{Z}$ and $b\nmid r$.
Clearly, in this case, we have
\begin{eqnarray}\label{eq2.27}
&&\sideset{}{'}\sum_{d=-\infty}^{+\infty}\frac{e^{2\pi \mathrm{i}dx}}{\bigl(d+\frac{r}{b}\bigl)^{j}}\nonumber\\
&&=e^{-2\pi \mathrm{i}[\frac{r}{b}]x}\biggl(\sum_{d=0}^{\infty}\frac{e^{2\pi \mathrm{i}dx}}{(d+\{\frac{r}{b}\})^{j}}+(-1)^{j}e^{-2\pi \mathrm{i}x}\sum_{d=0}^{\infty}\frac{e^{-2\pi \mathrm{i}dx}}{(d+1-\{\frac{r}{b}\})^{j}}\biggl)\nonumber\\
&&=e^{-2\pi \mathrm{i}[\frac{r}{b}]x}\biggl(\phi\biggl(x,\biggl\{\frac{r}{b}\biggl\},j\biggl)+(-1)^{j}e^{-2\pi \mathrm{i}x}\phi\biggl(-x,1-\biggl\{\frac{r}{b}\biggl\},j\biggl)\biggl),
\end{eqnarray}
where $\phi(x,a,s)$ is as in \eqref{eq2.24}.
By taking $x=1-\{\frac{r}{b}\}$ and $s=j$, and then replacing $a$ by $x$ in Lemma \ref{lem2.6}, we find that
\begin{eqnarray}\label{eq2.28}
&&\phi\biggl(1-\biggl\{\frac{r}{b}\biggl\},x,1-j\biggl)\nonumber\\
&&=\frac{(j-1)!}{(2\pi)^{j}}\biggl(e^{\frac{\pi \mathrm{i}j}{2}-2\pi\mathrm{i}x(1-\{\frac{r}{b}\})}\phi\biggl(-x,1-\biggl\{\frac{r}{b}\biggl\},j\biggl)\nonumber\\
&&\quad+e^{-\frac{\pi \mathrm{i}j}{2}+2\pi\mathrm{i}x\{\frac{r}{b}\}}\phi\biggl(x,\biggl\{\frac{r}{b}\biggl\},j\biggl)\biggl).
\end{eqnarray}
Multiplying both sides of \eqref{eq2.28} by $e^{\frac{\pi \mathrm{i}j}{2}-2\pi\mathrm{i}\{\frac{r}{b}\}x}$, we know from \eqref{eq2.27} that
\begin{equation}\label{eq2.29}
\sideset{}{'}\sum_{d=-\infty}^{+\infty}\frac{e^{2\pi \mathrm{i}dx}}{\bigl(d+\frac{r}{b}\bigl)^{j}}
=e^{-\frac{2\pi \mathrm{i}rx}{b}}\frac{(2\pi \mathrm{i})^{j}\phi(1-\{\frac{r}{b}\},x,1-j)}{(j-1)!}.
\end{equation}
It is easy from the division algorithm, \eqref{eq2.14} and \eqref{eq2.15} to see that
\begin{eqnarray}\label{eq2.30}
\phi\biggl(1-\biggl\{\frac{r}{b}\biggl\},x,1-j\biggl)&=&\sum_{l=0}^{|b|-1}\sum_{m=0}^{\infty}\frac{e^{-\frac{2\pi \mathrm{i}(m|b|+l)r}{b}}}{(m|b|+l+x)^{1-j}}\nonumber\\
&=&-\frac{1}{j|b|^{1-j}}\sum_{l=0}^{|b|-1}e^{-\frac{2\pi \mathrm{i}lr}{b}}\overline{B}_{j}\biggl(\frac{l+x}{|b|}\biggl)\nonumber\\
&=&-\frac{1}{j|b|^{1-j}}\sum_{l=1}^{|b|}e^{-\frac{2\pi \mathrm{i}(|b|-l)r}{b}}\overline{B}_{j}\biggl(\frac{|b|-l+x}{|b|}\biggl)\nonumber\\
&=&\frac{(-1)^{j-1}\mathrm{sgn}(b)}{jb^{1-j}}\sum_{l=1}^{|b|}e^{\frac{2\pi \mathrm{i}lr}{b}}\overline{B}_{j}\biggl(\frac{l-x}{b}\biggl).
\end{eqnarray}
Therefore, inserting \eqref{eq2.30} into \eqref{eq2.29}, we say that Lemma \ref{lem2.7} holds true for the case $x\not\in\mathbb{Z}$ and $b\nmid r$. This completes the proof of Lemma \ref{lem2.7}.
\end{proof}

\begin{lemma}\label{lem2.8} Let $m,n\in\mathbb{N}$. Then
\begin{equation}\label{eq2.31}
\sum_{j=1}^{m}\binom{m+n-j-1}{n-1}=\binom{m+n-1}{n}.
\end{equation}
\end{lemma}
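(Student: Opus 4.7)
The plan is to recognize the identity as a disguised form of the classical hockey-stick (Christmas-stocking) identity for binomial coefficients, namely
\begin{equation*}
\sum_{i=r}^{N}\binom{i}{r}=\binom{N+1}{r+1}.
\end{equation*}
First I would reindex the sum on the left-hand side of \eqref{eq2.31}: set $i=m+n-j-1$, so that as $j$ ranges from $1$ to $m$, $i$ ranges from $n-1$ to $m+n-2$. The sum becomes
\begin{equation*}
\sum_{j=1}^{m}\binom{m+n-j-1}{n-1}=\sum_{i=n-1}^{m+n-2}\binom{i}{n-1},
\end{equation*}
and the hockey-stick identity with $r=n-1$ and $N=m+n-2$ immediately yields $\binom{m+n-1}{n}$, which is exactly the right-hand side of \eqref{eq2.31}.

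As a self-contained alternative (and in case the reader wants to avoid quoting the hockey-stick identity), I would give a short induction on $m$, using Pascal's rule $\binom{k}{n-1}+\binom{k}{n}=\binom{k+1}{n}$. The base case $m=1$ reduces to $\binom{n-1}{n-1}=1=\binom{n}{n}$. For the inductive step, write
\begin{equation*}
\sum_{j=1}^{m+1}\binom{m+n-j}{n-1}=\binom{m+n-1}{n-1}+\sum_{j=2}^{m+1}\binom{m+n-j}{n-1},
\end{equation*}
shift the index in the remaining sum by one to match the inductive hypothesis, and then apply Pascal's rule to combine $\binom{m+n-1}{n-1}+\binom{m+n-1}{n}=\binom{m+n}{n}$.

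There is no real obstacle here; the only thing to be careful about is getting the reindexing right so that the limits of summation and the binomial parameters correspond exactly to the standard hockey-stick pattern. I would present the reindexing proof as the primary argument because it is the shortest, and note that it is equivalent to iterating Pascal's rule.
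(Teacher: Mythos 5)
Your proof is correct. Both versions check out: the reindexing $i=m+n-j-1$ does turn the left-hand side into $\sum_{i=n-1}^{m+n-2}\binom{i}{n-1}$, and the hockey-stick identity with $r=n-1$, $N=m+n-2$ gives $\binom{m+n-1}{n}$; the induction via Pascal's rule is also sound, with the base case and the split of the $j=1$ term handled correctly. The paper takes a different route: it invokes the residue representation $\binom{\alpha}{n}=\operatorname{res}_{z=0}(1+z)^{\alpha}z^{-n-1}$ (Lemma 2.8's proof cites Comtet), pulls the residue outside the sum, evaluates the finite geometric series $\sum_{j=1}^{m}(1+z)^{m-j}=\bigl((1+z)^{m}-1\bigr)/z$, and reads off the answer as $\operatorname{res}_{z=0}z^{-n-1}\bigl((1+z)^{m+n-1}-(1+z)^{n-1}\bigr)=\binom{m+n-1}{n}$, the second term contributing nothing since $n\geq1$. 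The two arguments are essentially the same telescoping in different clothing: the geometric-series summation under the residue is the generating-function incarnation of iterating Pascal's rule, which you yourself observe is what the hockey-stick identity amounts to. Your version is more elementary and self-contained; the paper's is a one-line computation once the residue formalism (already set up in Section 2 for other purposes) is available. Either is perfectly acceptable here.
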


\begin{proof}
It is easily seen from the formal binomial series stated in \cite[p. 37]{comtet} that for $n\in\mathbb{N}_{0}$, $\alpha\in\mathbb{C}$,
\begin{equation}\label{eq2.32}
\binom{\alpha}{n}=\operatorname{res}_{z=0}(1+z)^{\alpha}z^{-n-1}.
\end{equation}
Hence, we get from \eqref{eq2.32} that
\begin{eqnarray*}
\sum_{j=1}^{m}\binom{m+n-j-1}{n-1}&=&\sum_{j=1}^{m}\operatorname{res}_{z=0}(1+z)^{m+n-j-1}z^{-n}\nonumber\\
&=&\operatorname{res}_{z=0}z^{-n}(1+z)^{n-1}\sum_{j=1}^{m}(1+z)^{m-j}\nonumber\\
&=&\operatorname{res}_{z=0}z^{-n-1}\bigl((1+z)^{m+n-1}-(1+z)^{n-1}\bigl)\nonumber\\
&=&\binom{m+n-1}{n},
\end{eqnarray*}
as desired. This concludes the proof of Lemma \ref{lem2.8}.
\end{proof}

\section{Formulas of products of Bernoulli functions}

We are now in a position to give the following formula of the products of two Bernoulli functions.

\begin{theorem}\label{thm3.1} Let $m,n\in\mathbb{N}$, $a,b\in\mathbb{Z}$ with $a\not=0$ and $b\not=0$. Then, for $x,y,z\in\mathbb{R}$,
\begin{eqnarray}\label{eq3.1}
&&\overline{B}_{m}(ax+y)\overline{B}_{n}(bx+z)\nonumber\\
&&=nb^{n-1}\mathrm{sgn}(b)\sum_{j=0}^{m}\binom{m}{j}\frac{(-1)^{j}a^{m-j}}{m+n-j}\sum_{l=1}^{|b|}\overline{B}_{j}\biggl(\frac{a(l+z)}{b}-y\biggl)\nonumber\\
&&\qquad\times\overline{B}_{m+n-j}\biggl(x+\frac{l+z}{b}\biggl)\nonumber\\
&&\quad+ma^{m-1}\mathrm{sgn}(a)\sum_{j=0}^{n}\binom{n}{j}\frac{(-1)^{j}b^{n-j}}{m+n-j}\sum_{l=1}^{|a|}\overline{B}_{j}\biggl(\frac{b(l+y)}{a}-z\biggl)\nonumber\\
&&\qquad\times\overline{B}_{m+n-j}\biggl(x+\frac{l+y}{a}\biggl)\nonumber\\
&&\quad+\frac{(-1)^{n-1}m!n!(a,b)^{m+n}\overline{B}_{m+n}\bigl(\frac{by-az}{(a,b)}\bigl)}{a^{n}b^{m}(m+n)!}\nonumber\\
&&\quad-\delta_{1,m}\delta_{1,n}\mathrm{sgn}(ab)\frac{\delta_{\mathbb{Z}}(ax+y)\delta_{\mathbb{Z}}(bx+z)}{4},
\end{eqnarray}
where $\delta_{\mathbb{Z}}(x)$ is as in \eqref{eq1.5}, $(a,b)$ is as in \eqref{eq1.7}.
\end{theorem}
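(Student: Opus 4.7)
I would prove Theorem \ref{thm3.1} by expanding the left-hand side via the Fourier series \eqref{eq2.1}:
\[
\overline{B}_m(ax+y)\,\overline{B}_n(bx+z) = \frac{m!\,n!}{(2\pi\mathrm{i})^{m+n}} \sideset{}{'}\sum_{j,k}\frac{e^{2\pi\mathrm{i}(jy+kz)}\,e^{2\pi\mathrm{i}(ja+kb)x}}{j^{m}\,k^{n}}.
\]
The key step is to introduce the auxiliary index $d = ja + kb$ and split the double series according to whether $d = 0$ or $d \ne 0$. The $d = 0$ contribution comes from pairs $(j,k) = \bigl(tb/(a,b),\,-ta/(a,b)\bigr)$ with $t \in \mathbb{Z}\setminus\{0\}$; collapsing the resulting single sum over $t$ by another application of \eqref{eq2.1} produces precisely the term $\frac{(-1)^{n-1}m!n!(a,b)^{m+n}}{a^{n}b^{m}(m+n)!}\overline{B}_{m+n}\!\bigl(\frac{by-az}{(a,b)}\bigr)$ of the statement.

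For $d \ne 0$, I would apply Lemma \ref{lem2.2} with $d$ playing the role of the variable to decompose $\frac{1}{j^{m}k^{n}} = \frac{b^{n}}{j^{m}(d-ja)^{n}}$, writing it as a sum of $m$ terms of the form $\binom{m+n-j'-1}{n-1}\frac{a^{m-j'}b^{n}}{d^{m+n-j'}j^{j'}}$ (with $j'=1,\ldots,m$) plus $n$ symmetric terms of the form $\binom{m+n-j'-1}{m-1}\frac{a^{m}b^{n-j'}}{d^{m+n-j'}k^{j'}}$ (with $j'=1,\ldots,n$). Substituting back, each batch becomes a double series in $(j,k)$ which I would reorganise by first evaluating the inner sum via Lemma \ref{lem2.7}; that lemma converts the inner sum (say $\sideset{}{'}\sum_k e^{2\pi\mathrm{i}k(z+bx)}/(k+ja/b)^{m+n-j'}$) into a finite sum over $l\in\{1,\ldots,|b|\}$ of values of $\overline{B}_{m+n-j'}$. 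The remaining outer Fourier sum over $j$ then collapses to $\overline{B}_{j'}$ via \eqref{eq2.1}; a short calculation using the sign identity \eqref{eq2.15}, Lemma \ref{lem2.8}, and a cyclic relabelling of $l$ recasts the result into the exact shape of the first $l$-sum in the theorem (for the terms with $j=1,\ldots,m$), and the second batch yields the second $l$-sum analogously.

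A subtle point is that the sum in Lemma \ref{lem2.7} includes the index $k=0$, whereas the original double series excludes it. The resulting defects, one per batch, are proportional to $\overline{B}_{m+n}(ax+y)$ and $\overline{B}_{m+n}(bx+z)$; using the distribution property \eqref{eq2.21} together with \eqref{eq2.15}, these are then identified precisely with the $j=0$ contributions to the two finite $l$-sums on the right-hand side. The remaining obstacle is the conditional convergence of \eqref{eq2.1} when $m=1$ or $n=1$, where Parseval's formula (Lemma \ref{lem2.1}) is the natural tool for justifying the interchange of summation in the double series; its careful application isolates the only boundary defect that survives the cancellations, namely $-\delta_{1,m}\delta_{1,n}\mathrm{sgn}(ab)\delta_{\mathbb{Z}}(ax+y)\delta_{\mathbb{Z}}(bx+z)/4$. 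The main technical difficulty I anticipate is tracking the many sign factors $(-1)^{m+n-j'}$, $\mathrm{sgn}(a)$, and $\mathrm{sgn}(b)$ produced by Lemma \ref{lem2.7}, and verifying that they conspire with \eqref{eq2.15} to yield the clean signs appearing in the final identity.
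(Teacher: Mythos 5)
Your proposal is correct and follows essentially the same route as the paper: Fourier-expanding the product (with Parseval's formula justifying the resulting double series and supplying the $(f(x^{+})+f(x^{-}))/2$ convention responsible for the $-\delta_{1,m}\delta_{1,n}\mathrm{sgn}(ab)\delta_{\mathbb{Z}}(ax+y)\delta_{\mathbb{Z}}(bx+z)/4$ term), isolating the zero-frequency contribution, applying the partial-fraction identity of Lemma \ref{lem2.2} to the nonzero frequencies, and evaluating the resulting sums via Lemma \ref{lem2.7}, Lemma \ref{lem2.8}, \eqref{eq2.15} and \eqref{eq2.21}. The only difference is organizational: the paper computes each Fourier coefficient $c_{k}$ of the product in closed form before resumming over $k$, whereas you keep the double series and sum the two original indices successively; your identification of the excluded-index defects with the $j=0$ terms of the two finite $l$-sums corresponds exactly to the paper's $\epsilon(a,k)$ and $\epsilon(b,k)$ corrections.
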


\begin{proof}
Let
\begin{equation*}
f(x)=\overline{B}_{m}(ax+y)\overline{B}_{n}(bx+z).
\end{equation*}
Since $f(x)$ is of bounded variation on every finite interval, $f(x)$ may be expanded in a Fourier series
\begin{equation}\label{eq3.2}
\frac{f(x^{+})+f(x^{-})}{2}=\sum_{k=-\infty}^{+\infty}c_{k}(m,n|a,b,y,z)e^{2\pi\mathrm{i}kx},
\end{equation}
where the Fourier coefficients $c_{k}(m,n|a,b,y,z)$ are determined by
\begin{equation}\label{eq3.3}
c_{k}(m,n|a,b,y,z)=\int_{0}^{1}\overline{B}_{m}(ax+y)\overline{B}_{n}(bx+z)e^{-2\pi\mathrm{i}kx}dx.
\end{equation}
We now make the change of the variable $\theta=2\pi x$ in Lemma \ref{lem2.1} and set
\begin{equation*}
F(2\pi x)=\overline{B}_{m}(ax+y),\quad\overline{G(2\pi x)}=\overline{B}_{n}(bx+z)e^{-2\pi\mathrm{i}kx},
\end{equation*}
it then follows from \eqref{eq2.1} and \eqref{eq2.2} that we rewrite \eqref{eq3.3} as
\begin{equation}\label{eq3.4}
c_{k}(m,n|a,b,y,z)=\frac{m!n!}{(2\pi \mathrm{i})^{m+n}}\underset{al+bj=k}{\sideset{}{'}\sum_{l=-\infty}^{+\infty}\sideset{}{'}\sum_{j=-\infty}^{+\infty}}\frac{e^{2\pi\mathrm{i}(ly+jz)}} {l^{m}j^{n}}.
\end{equation}
Since the Diophantine equation $ax+by=k$ is solvable if and only if $(a,b)\mid k$, and in the case when it is solvable, all solutions of $ax+by=k$ are given by
\begin{equation*}
x=\frac{k\overline{a}}{(a,b)}+\frac{bd}{(a,b)},\quad y=\frac{k\overline{b}}{(a,b)}-\frac{ad}{(a,b)},
\end{equation*}
where $\overline{a},\overline{b},d\in\mathbb{Z}$ such that $a\overline{a}+b\overline{b}=(a,b)$, we see from \eqref{eq3.4} that if $(a,b)\nmid k$ then
\begin{equation}\label{eq3.5}
c_{k}(m,n|a,b,y,z)=0,
\end{equation}
and if $(a,b)\mid k$ then
\begin{eqnarray}\label{eq3.6}
&&c_{k}(m,n|a,b,y,z)\nonumber\\
&&=\frac{m!n!(a,b)^{m+n}e^{\frac{2\pi\mathrm{i}k(\overline{a}y+\overline{b}z)}{(a,b)}}}{(2\pi \mathrm{i})^{m+n}}\sideset{}{'}\sum_{d=-\infty}^{+\infty}\frac{e^{\frac{2\pi\mathrm{i}d(by-az)}{(a,b)}}}{(k\overline{a}+bd)^{m}(k\overline{b}-ad)^{n}}\nonumber\\
&&=\frac{(-1)^{n}m!n!(a,b)^{m+n}e^{\frac{2\pi\mathrm{i}k(\overline{a}y+\overline{b}z)}{(a,b)}}}{a^{n}b^{m}(2\pi \mathrm{i})^{m+n}}\sideset{}{'}\sum_{d=-\infty}^{+\infty}\frac{e^{\frac{2\pi\mathrm{i}d(by-az)}{(a,b)}}}{\bigl(d+\frac{k\overline{a}}{b}\bigl)^{m}\bigl(d-\frac{k\overline{b}}{a}\bigl)^{n}}.
\end{eqnarray}
In particular, we conclude from \eqref{eq2.1} and \eqref{eq3.6} that
\begin{equation}\label{eq3.7}
c_{0}(m,n|a,b,y,z)=\frac{(-1)^{n-1}m!n!(a,b)^{m+n}\overline{B}_{m+n}\bigl(\frac{by-az}{(a,b)}\bigl)}{a^{n}b^{m}(m+n)!}.
\end{equation}
We next consider the case $k\not=0$ in \eqref{eq3.6}. Note that there is an implicit restriction
\begin{equation*}
\biggl(\frac{a}{(a,b)},\overline{b}\biggl)=\biggl(\frac{b}{(a,b)},\overline{a}\biggl)=1.
\end{equation*}
Hence, by taking $x=-k\overline{a}/b$ and $y=k\overline{b}/a$ in Lemma \ref{lem2.2}, we obtain from \eqref{eq3.6} that if $(a,b)\mid k$ then
\begin{eqnarray}\label{eq3.8}
&&c_{k}(m,n|a,b,y,z)\nonumber\\
&&=\frac{(-1)^{n}m!n!(a,b)^{m+n}e^{\frac{2\pi\mathrm{i}k(\overline{a}y+\overline{b}z)}{(a,b)}}}{a^{n}b^{m}(2\pi \mathrm{i})^{m+n}}\nonumber\\
&&\quad\times\biggl(\sum_{j=1}^{m}\binom{m+n-j-1}{n-1}(-1)^{m-j}\biggl(-\frac{ab}{k(a,b)}\biggl)^{m+n-j}\nonumber\\
&&\qquad\quad\times\biggl(\sideset{}{'}\sum_{d=-\infty}^{+\infty}\frac{e^{\frac{2\pi\mathrm{i}d(by-az)}{(a,b)}}}{\bigl(d+\frac{k\overline{a}}{b}\bigl)^{j}}-\epsilon(a,k)e^{\frac{2\pi\mathrm{i}k\overline{b}(by-az)}{a(a,b)}}\biggl(\frac{ab}{k(a,b)}\biggl)^{j}\biggl)\nonumber\\
&&\qquad+\sum_{j=1}^{n}\binom{m+n-j-1}{m-1}(-1)^{n-j}\biggl(\frac{ab}{k(a,b)}\biggl)^{m+n-j}\nonumber\\
&&\qquad\quad\times\biggl(\sideset{}{'}\sum_{d=-\infty}^{+\infty}\frac{e^{\frac{2\pi\mathrm{i}d(by-az)}{(a,b)}}}{\bigl(d-\frac{k\overline{b}}{a}\bigl)^{j}}-\epsilon(b,k)e^{-\frac{2\pi\mathrm{i}k\overline{a}(by-az)}{b(a,b)}}\biggl(-\frac{ab}{k(a,b)}\biggl)^{j}\biggl)\biggl),
\end{eqnarray}
where $\epsilon(a,k)=1$ or $0$ according to $a\mid k$ or $a\nmid k$. With the help of Lemma \ref{lem2.7}, we discover that \eqref{eq3.8} can be rewritten in the following way
\begin{eqnarray}\label{eq3.9}
&&c_{k}(m,n|a,b,y,z)\nonumber\\
&&=-b^{n-1}m!n!(a,b)\sum_{j=1}^{m}\binom{m+n-j-1}{n-1}\frac{(-1)^{j}a^{m-j}}{j!(2\pi\mathrm{i}k)^{m+n-j}}\nonumber\\
&&\qquad\times\mathrm{sgn}\biggl(\frac{b}{(a,b)}\biggl)\sum_{l=1}^{|\frac{b}{(a,b)}|}e^{\frac{2\pi\mathrm{i}k(z+\overline{a}l)}{b}}\overline{B}_{j}\biggl(\frac{(a,b)l-(by-az)}{b}\biggl)\nonumber\\
&&\quad-\epsilon(a,k)\frac{a^{m}b^{n}m!n!e^{\frac{2\pi\mathrm{i}ky}{a}}}{(2\pi\mathrm{i}k)^{m+n}}\sum_{j=1}^{m}\binom{m+n-j-1}{n-1}\nonumber\\
&&\quad -a^{m-1}m!n!(a,b)\sum_{j=1}^{n}\binom{m+n-j-1}{m-1}\frac{b^{n-j}}{j!(2\pi\mathrm{i}k)^{m+n-j}}\nonumber\\
&&\qquad\times\mathrm{sgn}\biggl(\frac{a}{(a,b)}\biggl)\sum_{l=1}^{|\frac{a}{(a,b)}|}e^{\frac{2\pi\mathrm{i}k(y-\overline{b}l)}{a}}\overline{B}_{j}\biggl(\frac{(a,b)l-(by-az)}{a}\biggl)\nonumber\\
&&\quad-\epsilon(b,k)\frac{a^{m}b^{n}m!n!e^{\frac{2\pi\mathrm{i}kz}{b}}}{(2\pi\mathrm{i}k)^{m+n}}\sum_{j=1}^{n}\binom{m+n-j-1}{m-1}.
\end{eqnarray}
It follows from \eqref{eq3.9} and Lemma \ref{lem2.8} that if $(a,b)\mid k$ then
\begin{eqnarray}\label{eq3.10}
c_{k}(m,n|a,b,y,z)&=&-nb^{n-1}(a,b)\mathrm{sgn}(b)\sum_{j=1}^{m}\binom{m}{j}\frac{(-1)^{j}a^{m-j}(m+n-j-1)!}{(2\pi\mathrm{i}k)^{m+n-j}}\nonumber\\
&&\quad\times\sum_{l=1}^{|\frac{b}{(a,b)}|}e^{\frac{2\pi\mathrm{i}k(z+\overline{a}l)}{b}}\overline{B}_{j}\biggl(\frac{(a,b)l-(by-az)}{b}\biggl)\nonumber\\
&&-ma^{m-1}(a,b)\mathrm{sgn}(a)\sum_{j=1}^{n}\binom{n}{j}\frac{b^{n-j}(m+n-j-1)!}{(2\pi\mathrm{i}k)^{m+n-j}}\nonumber\\
&&\quad\times\sum_{l=1}^{|\frac{a}{(a,b)}|}e^{\frac{2\pi\mathrm{i}k(y-\overline{b}l)}{a}}\overline{B}_{j}\biggl(\frac{(a,b)l-(by-az)}{a}\biggl)\nonumber\\
&&-\epsilon(a,k)e^{\frac{2\pi\mathrm{i}ky}{a}}\frac{m(m+n-1)!a^{m}b^{n}}{(2\pi\mathrm{i}k)^{m+n}}\nonumber\\
&&-\epsilon(b,k)e^{\frac{2\pi\mathrm{i}kz}{b}}\frac{n(m+n-1)!a^{m}b^{n}}{(2\pi\mathrm{i}k)^{m+n}}.
\end{eqnarray}
Since $\bigl(\frac{a}{(a,b)},\frac{b}{(a,b)}\bigl)=1$, $\frac{al}{(a,b)}$ runs over a complete residue
system modulo $|\frac{b}{(a,b)}|$ as $l$ does, and $\frac{-bl}{(a,b)}$ runs over a complete residue system modulo $|\frac{a}{(a,b)}|$
as $l$ does. It then follows from \eqref{eq2.15} and \eqref{eq3.10} that if $(a,b)\mid k$ then
\begin{eqnarray}\label{eq3.11}
c_{k}(m,n|a,b,y,z)&=&-nb^{n-1}(a,b)\mathrm{sgn}(b)\sum_{j=1}^{m}\binom{m}{j}\frac{(-1)^{j}a^{m-j}(m+n-j-1)!}{(2\pi\mathrm{i}k)^{m+n-j}}\nonumber\\
&&\quad\times\sum_{l=1}^{|\frac{b}{(a,b)}|}e^{\frac{2\pi\mathrm{i}k(l+z)}{b}}\overline{B}_{j}\biggl(\frac{a(l+z)}{b}-y\biggl)\nonumber\\
&&-ma^{m-1}(a,b)\mathrm{sgn}(a)\sum_{j=1}^{n}\binom{n}{j}\frac{(-1)^{j}b^{n-j}(m+n-j-1)!}{(2\pi\mathrm{i}k)^{m+n-j}}\nonumber\\
&&\quad\times\sum_{l=1}^{|\frac{a}{(a,b)}|}e^{\frac{2\pi\mathrm{i}k(l+y)}{a}}\overline{B}_{j}\biggl(\frac{b(l+y)}{a}-z\biggl)\nonumber\\
&&-\epsilon(a,k)e^{\frac{2\pi\mathrm{i}ky}{a}}\frac{m(m+n-1)!a^{m}b^{n}}{(2\pi\mathrm{i}k)^{m+n}}\nonumber\\
&&-\epsilon(b,k)e^{\frac{2\pi\mathrm{i}kz}{b}}\frac{n(m+n-1)!a^{m}b^{n}}{(2\pi\mathrm{i}k)^{m+n}}.
\end{eqnarray}
Inserting \eqref{eq3.5}, \eqref{eq3.7} and \eqref{eq3.11} into \eqref{eq3.2}, we arrive at
\begin{eqnarray}\label{eq3.12}
&&\overline{B}_{m}(ax+y)\overline{B}_{n}(bx+z)+\delta_{1,m}\delta_{1,n}\mathrm{sgn}(ab)\frac{\delta_{\mathbb{Z}}(ax+y)\delta_{\mathbb{Z}}(bx+z)}{4}\nonumber\\
&&=\sideset{}{'}\sum_{\substack{k=-\infty\\(a,b)\mid k}}^{+\infty}\biggl(-nb^{n-1}(a,b)\mathrm{sgn}(b)\sum_{j=1}^{m}\binom{m}{j}\frac{(-1)^{j}a^{m-j}(m+n-j-1)!}{(2\pi\mathrm{i}k)^{m+n-j}}\nonumber\\
&&\qquad\times\sum_{l=1}^{|\frac{b}{(a,b)}|}e^{\frac{2\pi\mathrm{i}k(l+z)}{b}}\overline{B}_{j}\biggl(\frac{a(l+z)}{b}-y\biggl)\nonumber\\
&&\quad-ma^{m-1}(a,b)\mathrm{sgn}(a)\sum_{j=1}^{n}\binom{n}{j}\frac{(-1)^{j}b^{n-j}(m+n-j-1)!}{(2\pi\mathrm{i}k)^{m+n-j}}\nonumber\\
&&\qquad\times\sum_{l=1}^{|\frac{a}{(a,b)}|}e^{\frac{2\pi\mathrm{i}k(l+y)}{a}}\overline{B}_{j}\biggl(\frac{b(l+y)}{a}-z\biggl)\biggl)e^{2\pi\mathrm{i}kx}\nonumber\\
&&\quad-\sideset{}{'}\sum_{\substack{k=-\infty\\a\mid k}}^{+\infty}\biggl(\frac{m(m+n-1)!a^{m}b^{n}e^{\frac{2\pi\mathrm{i}ky}{a}}}{(2\pi\mathrm{i}k)^{m+n}}\biggl)e^{2\pi\mathrm{i}kx}\nonumber\\
&&\quad-\sideset{}{'}\sum_{\substack{k=-\infty\\b\mid k}}^{+\infty}\biggl(\frac{n(m+n-1)!a^{m}b^{n}e^{\frac{2\pi\mathrm{i}kz}{b}}}{(2\pi\mathrm{i}k)^{m+n}}\biggl)e^{2\pi\mathrm{i}kx}\nonumber\\
&&\quad+\frac{(-1)^{n-1}m!n!(a,b)^{m+n}\overline{B}_{m+n}\bigl(\frac{by-az}{(a,b)}\bigl)}{a^{n}b^{m}(m+n)!}.
\end{eqnarray}
So from \eqref{eq2.1} and \eqref{eq3.12}, we know that
\begin{eqnarray}\label{eq3.13}
&&\overline{B}_{m}(ax+y)\overline{B}_{n}(bx+z)+\delta_{1,m}\delta_{1,n}\mathrm{sgn}(ab)\frac{\delta_{\mathbb{Z}}(ax+y)\delta_{\mathbb{Z}}(bx+z)}{4}\nonumber\\
&&=nb^{n-1}\mathrm{sgn}(b)\sum_{j=1}^{m}\binom{m}{j}\frac{(-1)^{j}a^{m-j}}{(a,b)^{m+n-j-1}(m+n-j)}\nonumber\\
&&\qquad\times\sum_{l=1}^{|\frac{b}{(a,b)}|}\overline{B}_{j}\biggl(\frac{a(l+z)}{b}-y\biggl)\overline{B}_{m+n-j}\biggl((a,b)x+\frac{(a,b)(l+z)}{b}\biggl)\nonumber\\
&&\quad+ma^{m-1}\mathrm{sgn}(a)\sum_{j=1}^{n}\binom{n}{j}\frac{(-1)^{j}b^{n-j}}{(a,b)^{m+n-j-1}(m+n-j)}\nonumber\\
&&\qquad\times\sum_{l=1}^{|\frac{a}{(a,b)}|}\overline{B}_{j}\biggl(\frac{b(l+y)}{a}-z\biggl)\overline{B}_{m+n-j}\biggl((a,b)x+\frac{(a,b)(l+y)}{a}\biggl)\nonumber\\
&&\quad+\frac{mb^{n}\overline{B}_{m+n}(ax+y)}{a^{n}(m+n)}+\frac{na^{m}\overline{B}_{m+n}(bx+z)}{b^{m}(m+n)}\nonumber\\
&&\quad+\frac{(-1)^{n-1}m!n!(a,b)^{m+n}\overline{B}_{m+n}\bigl(\frac{by-az}{(a,b)}\bigl)}{a^{n}b^{m}(m+n)!}.
\end{eqnarray}
It is easily shown from \eqref{eq2.21}, the property of residue systems and the division algorithm that
\begin{eqnarray}\label{eq3.14}
&&\frac{1}{(a,b)^{m+n-j-1}}\sum_{l=1}^{|\frac{b}{(a,b)}|}\overline{B}_{j}\biggl(\frac{a(l+z)}{b}-y\biggl)\overline{B}_{m+n-j}\biggl((a,b)x+\frac{(a,b)(l+z)}{b}\biggl)\nonumber\\
&&=\sum_{k=0}^{(a,b)-1}\sum_{l=1}^{|\frac{b}{(a,b)}|}\overline{B}_{j}\biggl(\frac{a\bigl(l+z+k|\frac{b}{(a,b)}|\bigl)}{b}-y\biggl)\overline{B}_{m+n-j}\biggl(x+\frac{l+z+k|\frac{b}{(a,b)}|}{b}\biggl)\nonumber\\
&&=\sum_{l=1}^{|b|}\overline{B}_{j}\biggl(\frac{a(l+z)}{b}-y\biggl)\overline{B}_{m+n-j}\biggl(x+\frac{l+z}{b}\biggl).
\end{eqnarray}
Therefore, by applying \eqref{eq3.14} to the right hand side of \eqref{eq3.13}, in view of $\overline{B}_{0}(x)=1$, \eqref{eq2.15} and \eqref{eq2.22}, we get \eqref{eq3.1} and finish the proof of Theorem \ref{thm3.1}.
\end{proof}

We next show some special cases of Theorem \ref{thm3.1}. We first present the following result.

\begin{corollary}\label{cor3.2} Let $m,n\in\mathbb{N}$, $a,b\in\mathbb{Z}$ with $a\not=0$ and $b\not=0$. Then, for $x,y,z\in\mathbb{R}$,
\begin{eqnarray}\label{eq3.15}
&&nb^{n-1}\mathrm{sgn}(b)\sum_{j=0}^{m}\binom{m}{j}\frac{(-1)^{m-j}a^{m-j}}{m+n-j}s_{j,m+n-j}(a,b;x,y)\nonumber\\
&&\quad+ma^{m-1}\mathrm{sgn}(a)\sum_{j=0}^{n}\binom{n}{j}\frac{(-1)^{n-j}b^{n-j}}{m+n-j}s_{j,m+n-j}(b,a;y,x)\nonumber\\
&&=-\delta_{1,m}\delta_{1,n}\mathrm{sgn}(ab)\frac{\delta_{\mathbb{Z}}(x)\delta_{\mathbb{Z}}(y)}{4}+\overline{B}_{m}(x)\overline{B}_{n}(y)\nonumber\\
&&\quad+\frac{m!n!(a,b)^{m+n}\overline{B}_{m+n}\bigl(\frac{ay+bx}{(a,b)}\bigl)}{a^{n}b^{m}(m+n)!},
\end{eqnarray}
where $\delta_{\mathbb{Z}}(x)$ is as in \eqref{eq1.5}, $(a,b)$ is as in \eqref{eq1.7}, $s_{m,n}(a,b;x,y)$ is defined for $m,n\in\mathbb{N}_{0}$, $a,b\in\mathbb{Z}$, $x,y\in\mathbb{R}$ with $b\not=0$ by
\begin{equation}\label{eq3.16}
s_{m,n}(a,b;x,y)=\sum_{r=1}^{|b|}\overline{B}_{m}\biggl(\frac{a(r+y)}{b}+x\biggl)\overline{B}_{n}\biggl(\frac{r+y}{b}\biggl).
\end{equation}
\end{corollary}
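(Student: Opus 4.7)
My plan is to obtain Corollary~\ref{cor3.2} as an immediate specialization of Theorem~\ref{thm3.1}. Specifically, I would apply identity \eqref{eq3.1} after replacing the free triple $(x,y,z)$ on the left by $(0,-x,y)$; this choice is engineered so that the inner summation arguments in \eqref{eq3.1} align with the definition \eqref{eq3.16} of $s_{m,n}(a,b;x,y)$.

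Under this substitution, several pieces simplify via \eqref{eq2.15} and periodicity of $\overline{B}_k$. The left-hand side $\overline{B}_m(ax+y)\overline{B}_n(bx+z)$ becomes $\overline{B}_m(-x)\overline{B}_n(y)=(-1)^m\overline{B}_m(x)\overline{B}_n(y)$, and the first inner sum in \eqref{eq3.1} collapses directly to $s_{j,m+n-j}(a,b;x,y)$. The second inner sum, namely $\sum_{l=1}^{|a|}\overline{B}_j\bigl(\frac{b(l-x)}{a}-y\bigr)\overline{B}_{m+n-j}\bigl(\frac{l-x}{a}\bigr)$, I would handle by replacing the summation variable via $l\mapsto -l$ modulo $|a|$ (which preserves a complete residue system) and applying \eqref{eq2.15} to each Bernoulli factor; the resulting signs $(-1)^j$ and $(-1)^{m+n-j}$ multiply to $(-1)^{m+n}$, turning the sum into $(-1)^{m+n}s_{j,m+n-j}(b,a;y,x)$. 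The free Bernoulli term $\overline{B}_{m+n}\bigl(\frac{by-az}{(a,b)}\bigr)$ similarly becomes $(-1)^{m+n}\overline{B}_{m+n}\bigl(\frac{ay+bx}{(a,b)}\bigr)$, and $\delta_{\mathbb{Z}}(-x)\delta_{\mathbb{Z}}(y)=\delta_{\mathbb{Z}}(x)\delta_{\mathbb{Z}}(y)$.

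The last step is to multiply the resulting identity by $(-1)^m$ and transpose the two Dedekind-type sums to the left-hand side. A careful sign audit then closes the argument: in the first sum, $(-1)^m\cdot(-1)^j=(-1)^{m-j}$; in the second sum, $(-1)^m\cdot(-1)^j\cdot(-1)^{m+n}=(-1)^{n-j}$; the scalar term acquires a total factor $(-1)^m\cdot(-1)^{n-1}\cdot(-1)^{m+n}=-1$, which after transposition produces the $+$ sign in front of $\frac{m!n!(a,b)^{m+n}\overline{B}_{m+n}\bigl(\frac{ay+bx}{(a,b)}\bigr)}{a^n b^m(m+n)!}$; and when $m=n=1$, the Kronecker contribution $(-1)^m\cdot\bigl(-\mathrm{sgn}(ab)\bigr)\delta_{\mathbb{Z}}(x)\delta_{\mathbb{Z}}(y)/4$ becomes, after transposition, the $-\delta_{1,m}\delta_{1,n}\mathrm{sgn}(ab)\delta_{\mathbb{Z}}(x)\delta_{\mathbb{Z}}(y)/4$ on the right of \eqref{eq3.15}. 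The principal obstacle is simply this sign bookkeeping, together with verifying that the residue-class substitution $l\mapsto-l$ on $\{1,\dots,|a|\}$ is valid uniformly in the sign of $a$; once these routine checks are dispatched, Corollary~\ref{cor3.2} drops out of Theorem~\ref{thm3.1}.
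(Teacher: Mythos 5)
Your proposal is correct and is essentially the paper's own proof: the paper likewise specializes Theorem \ref{thm3.1} to $x\in\mathbb{Z}$, relabels $y\mapsto -x$, $z\mapsto y$, converts the second inner sum via the residue-system substitution and the parity relation \eqref{eq2.15} (picking up the factor $(-1)^{m+n}$), and multiplies through by $(-1)^{m}$. Your sign audit matches \eqref{eq3.18} and the final form \eqref{eq3.15} exactly.
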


\begin{proof}
By taking $x\in\mathbb{Z}$ in Theorem \ref{thm3.1}, we find that for $m,n\in\mathbb{N}$, $a,b\in\mathbb{Z}$, $y,z\in\mathbb{R}$ with $a\not=0$ and $b\not=0$,
\begin{eqnarray}\label{eq3.17}
&&nb^{n-1}\mathrm{sgn}(b)\sum_{j=0}^{m}\binom{m}{j}\frac{(-1)^{j}a^{m-j}}{m+n-j}\sum_{l=1}^{|b|}\overline{B}_{j}\biggl(\frac{a(l+z)}{b}-y\biggl)\overline{B}_{m+n-j}\biggl(\frac{l+z}{b}\biggl)\nonumber\\
&&\quad+ma^{m-1}\mathrm{sgn}(a)\sum_{j=0}^{n}\binom{n}{j}\frac{(-1)^{j}b^{n-j}}{m+n-j}\sum_{l=1}^{|a|}\overline{B}_{j}\biggl(\frac{b(l+y)}{a}-z\biggl)\overline{B}_{m+n-j}\biggl(\frac{l+y}{a}\biggl)\nonumber\\
&&=\delta_{1,m}\delta_{1,n}\mathrm{sgn}(ab)\frac{\delta_{\mathbb{Z}}(y)\delta_{\mathbb{Z}}(z)}{4}+\overline{B}_{m}(y)\overline{B}_{n}(z)\nonumber\\
&&\quad+\frac{(-1)^{n}m!n!(a,b)^{m+n}\overline{B}_{m+n}\bigl(\frac{by-az}{(a,b)}\bigl)}{a^{n}b^{m}(m+n)!}.
\end{eqnarray}
If we replace $y$ by $-x$, $z$ by $y$ in \eqref{eq3.17}, then we obtain from \eqref{eq2.15} and the property of residue systems that
\begin{eqnarray}\label{eq3.18}
&&nb^{n-1}\mathrm{sgn}(b)\sum_{j=0}^{m}\binom{m}{j}\frac{(-1)^{j}a^{m-j}}{m+n-j}s_{j,m+n-j}(a,b;x,y)\nonumber\\
&&\quad+(-1)^{m+n}ma^{m-1}\mathrm{sgn}(a)\sum_{j=0}^{n}\binom{n}{j}\frac{(-1)^{j}b^{n-j}}{m+n-j}s_{j,m+n-j}(b,a;y,x)\nonumber\\
&&=\delta_{1,m}\delta_{1,n}\mathrm{sgn}(ab)\frac{\delta_{\mathbb{Z}}(x)\delta_{\mathbb{Z}}(y)}{4}+(-1)^{m}\overline{B}_{m}(x)\overline{B}_{n}(y)\nonumber\\
&&\quad+\frac{(-1)^{m}m!n!(a,b)^{m+n}\overline{B}_{m+n}\bigl(\frac{ay+bx}{(a,b)}\bigl)}{a^{n}b^{m}(m+n)!}.
\end{eqnarray}
Thus, multiplying both sides of \eqref{eq3.18} by $(-1)^{m}$ gives the desired result.
\end{proof}

In particular, taking $m=n=1$ in Corollary \ref{cor3.2}, we conclude from \eqref{eq2.21} that for $a,b\in\mathbb{N}$, $x,y\in\mathbb{R}$,
\begin{eqnarray}\label{eq3.19}
&&s(a,b;x,y)+s(b,a;y,x)\nonumber\\
&&=-\frac{\delta_{\mathbb{Z}}(x)\delta_{\mathbb{Z}}(y)}{4}+((x))((y))+\frac{a\overline{B}_{2}(y)}{2b}+\frac{b\overline{B}_{2}(x)}{2a}\nonumber\\
&&\quad+\frac{(a,b)^{2}\overline{B}_{2}\bigl(\frac{ay+bx}{(a,b)}\bigl)}{2ab},
\end{eqnarray}
from which we get Rademacher's reciprocity formula \eqref{eq1.5} immediately. It should be noted that the formula \eqref{eq3.19} can be also easily deduced from Berndt's reciprocity formula \eqref{eq1.7} (see \cite[Corollary 4.5]{berndt3} for details).

We now use Theorem \ref{thm3.1} to give another formula of the products of two Bernoulli functions as follows.

\begin{theorem}\label{thm3.3} Let $m,n\in\mathbb{N}$, $a,b\in\mathbb{Z}$ with $a\not=0$ and $b\not=0$. Then, for $x,y,z\in\mathbb{R}$,
\begin{eqnarray}\label{eq3.20}
&&ma\overline{B}_{m-1}(ax+y)\overline{B}_{n}(bx+z)+nb\overline{B}_{m}(ax+y)\overline{B}_{n-1}(bx+z)\nonumber\\
&&=nb^{n-1}\mathrm{sgn}(b)\sum_{j=0}^{m}\binom{m}{j}(-1)^{j}a^{m-j}\sum_{l=1}^{|b|}\overline{B}_{j}\biggl(\frac{a(l+z)}{b}-y\biggl)\nonumber\\
&&\qquad\times\overline{B}_{m+n-j-1}\biggl(x+\frac{l+z}{b}\biggl)\nonumber\\
&&\quad+ma^{m-1}\mathrm{sgn}(a)\sum_{j=0}^{n}\binom{n}{j}(-1)^{j}b^{n-j}\sum_{l=1}^{|a|}\overline{B}_{j}\biggl(\frac{b(l+y)}{a}-z\biggl)\nonumber\\
&&\qquad\times\overline{B}_{m+n-j-1}\biggl(x+\frac{l+y}{a}\biggl)\nonumber\\
&&\quad-\mathrm{sgn}(ab)\delta_{\mathbb{Z}}(ax+y)\delta_{\mathbb{Z}}(bx+z)\frac{\delta_{1,m-1}\delta_{1,n}ma+\delta_{1,m}\delta_{1,n-1}nb}{4},
\end{eqnarray}
where $\delta_{\mathbb{Z}}(x)$ is as in \eqref{eq1.5}.
\end{theorem}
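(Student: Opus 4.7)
The plan is to derive eq~\eqref{eq3.20} by formally differentiating both sides of Theorem~\ref{thm3.1} with respect to $x$. The left-hand side of eq~\eqref{eq3.20} is exactly $\frac{d}{dx}\bigl[\overline{B}_m(ax+y)\overline{B}_n(bx+z)\bigr]$: the product and chain rules, combined with the identity $\overline{B}_k'(u)=k\overline{B}_{k-1}(u)$ (valid everywhere for $k\geq2$ and almost everywhere for $k=1$), produce exactly $ma\overline{B}_{m-1}(ax+y)\overline{B}_n(bx+z)+nb\overline{B}_m(ax+y)\overline{B}_{n-1}(bx+z)$. So applying $\partial_x$ to the identity of Theorem~\ref{thm3.1} and checking that the result coincides with eq~\eqref{eq3.20} is the natural route.

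On the right-hand side of eq~\eqref{eq3.1}, the variable $x$ enters only through $\overline{B}_{m+n-j}\bigl(x+\frac{l+z}{b}\bigr)$ in the first double sum and $\overline{B}_{m+n-j}\bigl(x+\frac{l+y}{a}\bigr)$ in the second. In both cases, differentiation contributes a factor $(m+n-j)$, which cancels the denominator $\frac{1}{m+n-j}$ and produces exactly the first and second double sums on the right of eq~\eqref{eq3.20}. The term $\frac{(-1)^{n-1}m!n!(a,b)^{m+n}\overline{B}_{m+n}\bigl(\frac{by-az}{(a,b)}\bigl)}{a^nb^m(m+n)!}$ is constant in $x$ and disappears, while the delta term $-\delta_{1,m}\delta_{1,n}\mathrm{sgn}(ab)\delta_{\mathbb{Z}}(ax+y)\delta_{\mathbb{Z}}(bx+z)/4$ has classical derivative zero at ``generic'' $x$ (i.e., $ax+y,bx+z\notin\mathbb{Z}$). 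Thus at generic $x$ the pointwise identity eq~\eqref{eq3.20} holds with no delta contribution on either side.

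The hard part, and the main obstacle, is upgrading this to a pointwise identity at the exceptional values where $ax+y\in\mathbb{Z}$ or $bx+z\in\mathbb{Z}$. Both sides of eq~\eqref{eq3.20} are piecewise polynomial in $x$, so the identity extends by continuity to every $x$ at which each side is individually continuous; the only residual discontinuities occur when $ax+y,bx+z\in\mathbb{Z}$ simultaneously and when either $\overline{B}_{m-1}$ or $\overline{B}_{n-1}$ is $\overline{B}_1$, i.e.\ for $(m,n)\in\{(2,1),(1,2)\}$. In each of these two cases a direct evaluation at such an $x_0$, using $\overline{B}_1(\text{integer})=0$ and $\overline{B}_2(\text{integer})=\frac{1}{6}$, shows that the indicator term $-\mathrm{sgn}(ab)\delta_{\mathbb{Z}}(ax+y)\delta_{\mathbb{Z}}(bx+z)(\delta_{1,m-1}\delta_{1,n}ma+\delta_{1,m}\delta_{1,n-1}nb)/4$ is exactly the boundary correction required to restore the equality. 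Alternatively, one can sidestep the case analysis entirely by reproducing the Fourier argument of the proof of Theorem~\ref{thm3.1} for the function $g(x)=ma\overline{B}_{m-1}(ax+y)\overline{B}_n(bx+z)+nb\overline{B}_m(ax+y)\overline{B}_{n-1}(bx+z)$, whose Fourier coefficients are $2\pi\mathrm{i}k$ times those appearing in eq~\eqref{eq3.3}, and then invoking Lemmas~\ref{lem2.1}, \ref{lem2.2}, \ref{lem2.7} and \ref{lem2.8} exactly as in the proof of Theorem~\ref{thm3.1}.
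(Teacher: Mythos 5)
Your core idea --- obtaining \eqref{eq3.20} by applying $\partial/\partial x$ to \eqref{eq3.1} --- is exactly what the paper does, but the paper uses it \emph{only} in the case $m\geq2$ and $n\geq2$, where the indicator term of \eqref{eq3.20} vanishes identically. For the boundary cases the paper avoids differentiation altogether: it checks $m=n=1$ directly, and for $m=1$, $n\geq2$ it observes that Theorem \ref{thm3.1} at the shifted index $(1,n-1)$, after collapsing the $j=0$ inner sum via Raabe's formula \eqref{eq2.21}/\eqref{eq2.22} and multiplying through by $bn$, \emph{is} the statement \eqref{eq3.20} at $(1,n)$ --- including the indicator term, which transforms correctly under this bookkeeping (similarly for $n=1$, $m\geq2$ from the case $n=1$ of Theorem \ref{thm3.1}). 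That algebraic route gets the exceptional points for free; your route has to fight for them.

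Two points in your treatment of the exceptional set need repair. First, the classification is incomplete: since $\overline{B}_2$ fails to be differentiable at integers and $\overline{B}_1$ fails to be continuous there, differentiation of \eqref{eq3.1} yields only an almost-everywhere identity whenever $\min(m,n)\leq2$, and the two sides of \eqref{eq3.20} are genuinely discontinuous in $x$ for many more pairs than $(2,1)$ and $(1,2)$ --- e.g.\ $(1,1)$, $(1,3)$, $(2,3)$ --- at points where only one of $ax+y$, $bx+z$ is an integer. ``Extends by continuity'' does not cover those points. The correct patch is the averaging property: every discontinuous $x$-dependent factor on either side is a single $\overline{B}_1$, whose value at a jump equals the mean of its one-sided limits, and on the right-hand side each summand contains at most one $x$-dependent Bernoulli factor; hence both sides take the mean of their one-sided limits except when the left-hand side contains a product $\overline{B}_1(ax+y)\overline{B}_1(bx+z)$ jumping at the same point, i.e.\ precisely for $(m,n)\in\{(2,1),(1,2)\}$ with $ax+y,bx+z\in\mathbb{Z}$ simultaneously, where your $-\mathrm{sgn}(ab)/4$-type computation correctly recovers the indicator term. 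You should state and use this averaging argument explicitly. Second, your fallback claim that the Fourier coefficients of $g$ are $2\pi\mathrm{i}k$ times those in \eqref{eq3.3} is false in exactly the problematic cases $m=1$ or $n=1$: there $f$ has jump discontinuities, so $\widehat{f'}(k)$ differs from $2\pi\mathrm{i}k\widehat{f}(k)$ by the jump contributions; you would instead have to compute the coefficients of $g$ from scratch via Parseval (essentially re-running the proof of Theorem \ref{thm3.1} with shifted indices) and still confront the value-versus-average issue at the jumps. With these two repairs your argument is a valid, if heavier, alternative to the paper's handling of the boundary cases.
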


\begin{proof}
We consider separately the case of $m=n=1$, $m=1$ and $n\geq2$, $m\geq2$ and $n=1$, $m\geq2$ and $n\geq2$. It is easy from \eqref{eq2.15} and \eqref{eq2.22} to check that when $m=n=1$, the right hand side of \eqref{eq3.20} equals
\begin{eqnarray}\label{eq3.21}
&&a\overline{B}_{1}(bx+z)-\mathrm{sgn}(b)\sum_{l=1}^{|b|}\overline{B}_{1}\biggl(\frac{a(l+z)}{b}-y\biggl)\nonumber\\
&&\quad+b\overline{B}_{1}(ax+y)-\mathrm{sgn}(a)\sum_{l=1}^{|a|}\overline{B}_{1}\biggl(\frac{b(l+y)}{a}-z\biggl).
\end{eqnarray}
Notice that from \eqref{eq2.22} and the property of residue systems, we find that for $j\in\mathbb{N}_{0}$, $a,b\in\mathbb{Z}$, $x\in\mathbb{R}$ with $a\not=0$ and $b\not=0$,
\begin{eqnarray}\label{eq3.22}
\sum_{l=1}^{|b|}\overline{B}_{j}\biggl(\frac{al-x}{b}\biggl)&=&(a,b)\sum_{l=1}^{|\frac{b}{(a,b)}|}\overline{B}_{j}\biggl(\frac{\frac{al}{(a,b)}-\frac{x}{(a,b)}}{\frac{b}{(a,b)}}\biggl)\nonumber\\
&=&(a,b)\sum_{l=1}^{|\frac{b}{(a,b)}|}\overline{B}_{j}\biggl(\frac{l-\frac{x}{(a,b)}}{\frac{b}{(a,b)}}\biggl)\nonumber\\
&=&(-1)^{j}b^{1-j}(a,b)^{j}\mathrm{sgn}(b)\overline{B}_{j}\biggl(\frac{x}{(a,b)}\biggl).
\end{eqnarray}
It follows from \eqref{eq2.15} and \eqref{eq3.22} that \eqref{eq3.21} is equal to
\begin{equation*}
a\overline{B}_{1}(bx+z)+b\overline{B}_{1}(ax+y).
\end{equation*}
This says that Theorem \ref{thm3.3} holds true in the case when $m=n=1$.
We now discuss the case $m=1$ and $n\geq2$. Obviously, the case $m=1$ in Theorem \ref{thm3.1} gives that for $n\in\mathbb{N}$, $a,b\in\mathbb{Z}$, $x,y,z\in\mathbb{R}$ with $a\not=0$ and $b\not=0$,
\begin{eqnarray}\label{eq3.23}
&&\overline{B}_{1}(ax+y)\overline{B}_{n}(bx+z)\nonumber\\
&&=\frac{nab^{n-1}\mathrm{sgn}(b)}{n+1}\sum_{l=1}^{|b|}\overline{B}_{n+1}\biggl(x+\frac{l+z}{b}\biggl)\nonumber\\
&&\quad-b^{n-1}\mathrm{sgn}(b)\sum_{l=1}^{|b|}\overline{B}_{1}\biggl(\frac{a(l+z)}{b}-y\biggl)\overline{B}_{n}\biggl(x+\frac{l+z}{b}\biggl)\nonumber\\
&&\quad+\frac{\mathrm{sgn}(a)}{n+1}\sum_{j=0}^{n}\binom{n+1}{j}(-1)^{j}b^{n-j}\sum_{l=1}^{|a|}\overline{B}_{j}\biggl(\frac{b(l+y)}{a}-z\biggl)\overline{B}_{n+1-j}\biggl(x+\frac{l+y}{a}\biggl)\nonumber\\
&&\quad+\frac{(-1)^{n-1}(a,b)^{n+1}\overline{B}_{n+1}\bigl(\frac{by-az}{(a,b)}\bigl)}{a^{n}b(n+1)}\nonumber\\
&&\quad-\delta_{1,n}\mathrm{sgn}(ab)\frac{\delta_{\mathbb{Z}}(ax+y)\delta_{\mathbb{Z}}(bx+z)}{4}.
\end{eqnarray}
Hence, we know from \eqref{eq2.15}, \eqref{eq2.22} and \eqref{eq3.22} that \eqref{eq3.23} can be rewritten as
\begin{eqnarray}\label{eq3.24}
&&\overline{B}_{1}(ax+y)\overline{B}_{n}(bx+z)+\frac{a}{b(n+1)}\overline{B}_{n+1}(bx+z)\nonumber\\
&&=ab^{n-1}\mathrm{sgn}(b)\sum_{l=1}^{|b|}\overline{B}_{n+1}\biggl(x+\frac{l+z}{b}\biggl)\nonumber\\
&&\quad-b^{n-1}\mathrm{sgn}(b)\sum_{l=1}^{|b|}\overline{B}_{1}\biggl(\frac{a(l+z)}{b}-y\biggl)\overline{B}_{n}\biggl(x+\frac{l+z}{b}\biggl)\nonumber\\
&&\quad+\frac{\mathrm{sgn}(a)}{n+1}\sum_{j=0}^{n+1}\binom{n+1}{j}(-1)^{j}b^{n-j}\sum_{l=1}^{|a|}\overline{B}_{j}\biggl(\frac{b(l+y)}{a}-z\biggl)\overline{B}_{n+1-j}\biggl(x+\frac{l+y}{a}\biggl)\nonumber\\
&&\quad-\delta_{1,n}\mathrm{sgn}(ab)\frac{\delta_{\mathbb{Z}}(ax+y)\delta_{\mathbb{Z}}(bx+z)}{4}.
\end{eqnarray}
Multiplying both sides of \eqref{eq3.24} by $b(n+1)$, we prove Theorem \ref{thm3.3} in the case when $m=1$ and $n\geq2$. Similarly, the case $n=1$ in Theorem \ref{thm3.1} means that the case $m\geq2$ and $n=1$ in Theorem \ref{thm3.3} is complete. It remains to consider the case $m\geq2$ and $n\geq2$. Taking the first derivation on both sides of \eqref{eq2.1} with respect to $x$, we conclude that for $n\in\mathbb{N}$, $x\in\mathbb{R}$ with $n\geq2$,
\begin{equation}\label{eq3.25}
\frac{\partial}{\partial x}\bigl(\overline{B}_{n}(x)\bigl)=n\overline{B}_{n-1}(x).
\end{equation}
Therefore, by making the operation $\partial/\partial x$ on both sides of \eqref{eq3.1} and then using \eqref{eq3.25}, we see that Theorem \ref{thm3.3} holds true in the case when $m\geq2$ and $n\geq2$. This proves Theorem \ref{thm3.3}.
\end{proof}

It follows that we state the following reciprocity formula for the sums \eqref{eq3.16}.

\begin{corollary}\label{cor3.4} Let $m,n\in\mathbb{N}$, $a,b\in\mathbb{Z}$ with $a\not=0$ and $b\not=0$. Then, for $x,y,z\in\mathbb{R}$,
\begin{eqnarray}\label{eq3.26}
&&nb^{n-1}\mathrm{sgn}(b)\sum_{j=0}^{m}\binom{m}{j}(-1)^{m-j}a^{m-j}s_{j,m+n-j-1}(a,b;x,y)\nonumber\\
&&\quad-ma^{m-1}\mathrm{sgn}(a)\sum_{j=0}^{n}\binom{n}{j}(-1)^{n-j}b^{n-j}s_{j,m+n-j-1}(b,a;y,x)\nonumber\\
&&=-\mathrm{sgn}(ab)\delta_{\mathbb{Z}}(x)\delta_{\mathbb{Z}}(y)\frac{\delta_{1,m}\delta_{1,n-1}nb-\delta_{1,m-1}\delta_{1,n}ma}{4}\nonumber\\
&&\quad+nb\overline{B}_{m}(x)\overline{B}_{n-1}(y)-ma\overline{B}_{m-1}(x)\overline{B}_{n}(y),
\end{eqnarray}
where $\delta_{\mathbb{Z}}(x)$ is as in \eqref{eq1.5}.
\end{corollary}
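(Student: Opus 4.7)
The strategy mirrors the derivation of Corollary \ref{cor3.2} from Theorem \ref{thm3.1}: I would specialize Theorem \ref{thm3.3} by taking $x\in\mathbb{Z}$, carry out the substitution $y\to -x$, $z\to y$, and finally multiply through by $(-1)^{m}$.

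Since $\overline{B}_{j}$ has period $1$, setting $x\in\mathbb{Z}$ in \eqref{eq3.20} removes $x$ from every Bernoulli argument and produces an identity of the shape
\begin{equation*}
ma\overline{B}_{m-1}(y)\overline{B}_{n}(z)+nb\overline{B}_{m}(y)\overline{B}_{n-1}(z)=\Sigma_{b}+\Sigma_{a}-\mathrm{sgn}(ab)\delta_{\mathbb{Z}}(y)\delta_{\mathbb{Z}}(z)\frac{\delta_{1,m-1}\delta_{1,n}ma+\delta_{1,m}\delta_{1,n-1}nb}{4},
\end{equation*}
where $\Sigma_{b}$ and $\Sigma_{a}$ denote the two inner sums over $l=1,\ldots,|b|$ and $l=1,\ldots,|a|$ respectively. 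After replacing $y$ by $-x$ and $z$ by $y$, the inner sum $\Sigma_{b}$ takes the form $\sum_{l=1}^{|b|}\overline{B}_{j}\bigl(a(l+y)/b+x\bigr)\overline{B}_{m+n-j-1}\bigl((l+y)/b\bigr)$, which is exactly $s_{j,m+n-j-1}(a,b;x,y)$ by \eqref{eq3.16}. For $\Sigma_{a}$, I would apply the parity law \eqref{eq2.15} to each Bernoulli factor, extracting a combined sign $(-1)^{j}(-1)^{m+n-j-1}=(-1)^{m+n-1}$ and turning the arguments into $b(x-l)/a+y$ and $(x-l)/a$; since $l\mapsto |a|-l$ permutes a complete residue system modulo $|a|$, a relabeling justified by the periodicity of $\overline{B}_{\cdot}$ identifies $\Sigma_{a}$ with $(-1)^{m+n-1}s_{j,m+n-j-1}(b,a;y,x)$. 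On the left-hand side, \eqref{eq2.15} contributes factors $(-1)^{m-1}$ and $(-1)^{m}$ in front of the two products, while $\delta_{\mathbb{Z}}(-x)=\delta_{\mathbb{Z}}(x)$ preserves the delta coefficient.

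The last step is to multiply through by $(-1)^{m}$. The left-hand side becomes $nb\overline{B}_{m}(x)\overline{B}_{n-1}(y)-ma\overline{B}_{m-1}(x)\overline{B}_{n}(y)$, matching the last two terms of \eqref{eq3.26}. In the first sum $(-1)^{m}\cdot(-1)^{j}=(-1)^{m-j}$, and in the second the combined sign $(-1)^{m}(-1)^{m+n-1}(-1)^{j}=(-1)^{n-1+j}=-(-1)^{n-j}$ delivers the required minus in front of $ma^{m-1}\mathrm{sgn}(a)$. For the delta piece, $(-1)^{m}$ acts oppositely on each Kronecker summand: $\delta_{1,m-1}$ is supported at $m=2$ (so $(-1)^{m}=+1$) while $\delta_{1,m}$ is supported at $m=1$ (so $(-1)^{m}=-1$), thereby converting $\delta_{1,m-1}\delta_{1,n}ma+\delta_{1,m}\delta_{1,n-1}nb$ into the difference $\delta_{1,m-1}\delta_{1,n}ma-\delta_{1,m}\delta_{1,n-1}nb$, which upon transposing to the other side of the equation yields precisely the delta term written in \eqref{eq3.26}.

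The only genuine obstacle is this sign-bookkeeping in the delta contribution, since the two Kronecker products live on opposite parities of $m$ and so $(-1)^{m}$ decorates them with opposite signs; once that is tracked correctly, the remainder of the argument is a routine rewriting of residue sums via \eqref{eq3.16} together with the parity and periodicity of $\overline{B}_{j}$.
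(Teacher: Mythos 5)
Your proposal is correct and follows essentially the same route as the paper: specialize Theorem \ref{thm3.3} at $x\in\mathbb{Z}$, substitute $y\to -x$, $z\to y$, convert the two residue sums into $s_{j,m+n-j-1}(a,b;x,y)$ and $(-1)^{m+n-1}s_{j,m+n-j-1}(b,a;y,x)$ via the parity law \eqref{eq2.15} and relabeling of the residue system, and multiply by $(-1)^{m}$. Your sign-bookkeeping, including the subtle point that $(-1)^{m}$ acts with opposite signs on the two Kronecker summands of the delta term, matches the paper's computation in \eqref{eq3.27}--\eqref{eq3.28}.
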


\begin{proof}
Taking $x\in\mathbb{Z}$ in Theorem \ref{thm3.3}, we obtain that for $m,n\in\mathbb{N}$, $a,b\in\mathbb{Z}$, $y,z\in\mathbb{R}$ with $a\not=0$ and $b\not=0$,
\begin{eqnarray}\label{eq3.27}
&&nb^{n-1}\mathrm{sgn}(b)\sum_{j=0}^{m}\binom{m}{j}(-1)^{j}a^{m-j}\sum_{l=1}^{|b|}\overline{B}_{j}\biggl(\frac{a(l+z)}{b}-y\biggl)\overline{B}_{m+n-j-1}\biggl(\frac{l+z}{b}\biggl)\nonumber\\
&&\quad+ma^{m-1}\mathrm{sgn}(a)\sum_{j=0}^{n}\binom{n}{j}(-1)^{j}b^{n-j}\sum_{l=1}^{|a|}\overline{B}_{j}\biggl(\frac{b(l+y)}{a}-z\biggl)\overline{B}_{m+n-j-1}\biggl(\frac{l+y}{a}\biggl)\nonumber\\
&&=\mathrm{sgn}(ab)\delta_{\mathbb{Z}}(y)\delta_{\mathbb{Z}}(z)\frac{\delta_{1,m-1}\delta_{1,n}ma+\delta_{1,m}\delta_{1,n-1}nb}{4}\nonumber\\
&&\quad+ma\overline{B}_{m-1}(y)\overline{B}_{n}(z)+nb\overline{B}_{m}(y)\overline{B}_{n-1}(z).
\end{eqnarray}
If we replace $y$ by $-x$, $z$ by $y$ in \eqref{eq3.27}, then we find from \eqref{eq2.15} and the property of residue systems that
\begin{eqnarray}\label{eq3.28}
&&nb^{n-1}\mathrm{sgn}(b)\sum_{j=0}^{m}\binom{m}{j}(-1)^{j}a^{m-j}s_{j,m+n-j-1}(a,b;x,y)\nonumber\\
&&\quad+(-1)^{m+n-1}ma^{m-1}\mathrm{sgn}(a)\sum_{j=0}^{n}\binom{n}{j}(-1)^{j}b^{n-j}s_{j,m+n-j-1}(b,a;y,x)\nonumber\\
&&=\mathrm{sgn}(ab)\delta_{\mathbb{Z}}(x)\delta_{\mathbb{Z}}(y)\frac{\delta_{1,m-1}\delta_{1,n}ma+\delta_{1,m}\delta_{1,n-1}nb}{4}\nonumber\\
&&\quad+(-1)^{m-1}ma\overline{B}_{m-1}(x)\overline{B}_{n}(y)+(-1)^{m}nb\overline{B}_{m}(x)\overline{B}_{n-1}(y).
\end{eqnarray}
Therefore, we get the desired result immediately after multiplying both sides of \eqref{eq3.28} by $(-1)^{m}$.
\end{proof}

We remark that the case $a,b$ being two relatively prime positive integers in Corollary \ref{cor3.4} is in agreement with Carlitz's \cite[Equation (1.14)]{carlitz5} reciprocity theorem. Note that the $n$-th Bernoulli function appearing in the generalized Dedekind-Rademacher sums introduced by Carlitz \cite{carlitz5} is $B_{n}(\{x\})$ rather than $\overline{B}_{n}(x)$.

\section{More general reciprocity formulas}

In this section, we shall use Theorems \ref{thm3.1} and \ref{thm3.3} to give some more general reciprocity formulas for the generalized Dedekind-Rademacher sums \eqref{eq1.13}. We first present the following reciprocity formula.

\begin{theorem}\label{thm4.1} Let $m,n\in\mathbb{N}$, $a,b,c\in\mathbb{Z}$ with $a\not=0,b\not=0$ and $c\not=0$. Then, for $x,y,z\in\mathbb{R}$,
\begin{eqnarray}\label{eq4.1}
&&s_{m,n}\left(
\begin{matrix}
a & b & c \\
x & y & z
\end{matrix}
\right)\nonumber\\
&&=nb^{n-1}\mathrm{sgn}(bc)\sum_{j=0}^{m}\binom{m}{j}\frac{(-1)^{m+n-j}a^{m-j}}{c^{m+n-j-1}(m+n-j)}s_{j,m+n-j}\left(
\begin{matrix}
a & c & b \\
x & z & y
\end{matrix}
\right)\nonumber\\
&&\quad+ma^{m-1}\mathrm{sgn}(ac)\sum_{j=0}^{n}\binom{n}{j}\frac{(-1)^{m+n-j}b^{n-j}}{c^{m+n-j-1}(m+n-j)}s_{j,m+n-j}\left(
\begin{matrix}
b & c & a \\
y & z & x
\end{matrix}
\right)\nonumber\\
&&\quad+\frac{(-1)^{n-1}m!n!c(a,b)^{m+n}\mathrm{sgn}(c)\overline{B}_{m+n}\bigl(\frac{ay-bx}{(a,b)}\bigl)}{a^{n}b^{m}(m+n)!}-\delta_{1,m}\delta_{1,n}\mathrm{sgn}(ab)\frac{\widetilde{N}}{4},
\end{eqnarray}
where $(a,b)$ is as in \eqref{eq1.7}, $\widetilde{N}$ is the number of distinct triples $r,s,t\in\mathbb{Z}$ such that
\begin{equation*}
0\leq\mathrm{sgn}(c)\frac{r+x}{a}=\mathrm{sgn}(c)\frac{s+y}{b}=\mathrm{sgn}(c)\frac{t+z}{c}<1.
\end{equation*}
\end{theorem}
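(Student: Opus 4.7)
The plan is to apply Theorem~\ref{thm3.1} inside the defining sum of $s_{m,n}\!\left(\begin{smallmatrix} a & b & c \\ x & y & z\end{smallmatrix}\right)$ and then sum over $r$. Writing the $r$-th summand as $\overline{B}_{m}(au+v)\overline{B}_{n}(bu+w)$ with $u=(r+z)/c$, $v=-x$, $w=-y$, Theorem~\ref{thm3.1} expresses this product as a sum of four pieces; summing over $r=1,\ldots,|c|$ should then map them onto the four terms on the right of \eqref{eq4.1}.

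The crucial step is the collapse of the $r$-sum in each of the first two (double) terms of Theorem~\ref{thm3.1}. After the above substitution, the first Bernoulli factor is independent of $r$, while the second has the form $\overline{B}_{m+n-j}\!\left(\tfrac{r+z}{c}+\alpha\right)$ for a constant $\alpha$ depending on $l$. Raabe's distribution formula \eqref{eq2.21} with modulus $|c|$, together with the parity identity \eqref{eq2.15} to handle the case $c<0$, collapses the $r$-sum to $\mathrm{sgn}(c)\,c^{1-(m+n-j)}\overline{B}_{m+n-j}(c\alpha+z)$. To recast the resulting $l$-sum as the symbol $s_{j,m+n-j}\!\left(\begin{smallmatrix} a & c & b \\ x & z & y \end{smallmatrix}\right)$ (respectively $s_{j,m+n-j}\!\left(\begin{smallmatrix} b & c & a \\ y & z & x \end{smallmatrix}\right)$ for the second term), I substitute $l\mapsto -l$, using that $-l$ traverses a complete residue system modulo $|b|$ (resp.\ $|a|$) as $l$ does and applying \eqref{eq2.15} to each Bernoulli factor; the net sign contributed is $(-1)^{j}\cdot(-1)^{m+n-j}=(-1)^{m+n}$, which combined with the existing $(-1)^{j}$ in Theorem~\ref{thm3.1} gives the $(-1)^{m+n-j}$ displayed in \eqref{eq4.1}.

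The remaining two pieces are immediate. The $r$-independent Bernoulli constant $\overline{B}_{m+n}\!\left(\tfrac{by-az}{(a,b)}\right)$ of Theorem~\ref{thm3.1} becomes $\overline{B}_{m+n}\!\left(\tfrac{ay-bx}{(a,b)}\right)$ under $y\mapsto -x$, $z\mapsto -y$, and summing over $r$ just multiplies it by $|c|=c\,\mathrm{sgn}(c)$, matching the third term of \eqref{eq4.1}. For the $\delta_{\mathbb{Z}}$-term, the sum $\sum_{r=1}^{|c|}\delta_{\mathbb{Z}}\!\left(\tfrac{a(r+z)}{c}-x\right)\delta_{\mathbb{Z}}\!\left(\tfrac{b(r+z)}{c}-y\right)$ depends only on $r\bmod|c|$, so it counts integers $t$ in any $|c|$-long window making both $a(t+z)/c-x$ and $b(t+z)/c-y$ integers; choosing the window so that $(t+z)/|c|\in[0,1)$ and setting $r'=a(t+z)/c-x$, $s'=b(t+z)/c-y$ gives a bijection with the triples $(r',s',t)$ counted by $\widetilde{N}$.

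The principal obstacle is sign bookkeeping rather than any new analytic input. Four separate sources of sign --- the $(-1)^{j}$ and $(-1)^{n-1}$ inherited from Theorem~\ref{thm3.1}, the $(-1)^{m+n}$ produced by $l\mapsto -l$, the $\mathrm{sgn}(c)$ coming from Raabe's formula with a possibly negative modulus, and the rewriting $|c|=c\,\mathrm{sgn}(c)$ --- must combine into exactly the prefactors $\mathrm{sgn}(bc)$, $\mathrm{sgn}(ac)$, $\mathrm{sgn}(c)$, and $\mathrm{sgn}(ab)$ displayed in \eqref{eq4.1}. Once this is tracked carefully, no further ideas beyond those already used in the proof of Theorem~\ref{thm3.1} are required.
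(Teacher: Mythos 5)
Your proposal is correct and follows essentially the same route as the paper: apply Theorem \ref{thm3.1} with the substitution that turns its left-hand side into the $r$-th summand of $s_{m,n}\bigl(\begin{smallmatrix} a & b & c \\ x & y & z\end{smallmatrix}\bigr)$, sum over $r$, collapse the $r$-sum via the distribution/parity identities \eqref{eq2.21} and \eqref{eq2.15} (i.e.\ \eqref{eq2.22}), and identify the resulting $l$-sums and the lattice-point count $\widetilde{N}$. The only difference is cosmetic bookkeeping: the paper substitutes $x\mapsto -(r+x)/c$ and relabels variables at the end, picking up a global $(-1)^{m+n}$, whereas you substitute directly and absorb the same sign through $l\mapsto -l$ in the $l$-sums; the two are equivalent.
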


\begin{proof}
Substituting $-(r+x)/c$ for $x$, and then making the operation $\sum_{r=1}^{|c|}$ on both sides of \eqref{eq3.1}, we show from \eqref{eq2.15} and \eqref{eq2.22} that for $m,n\in\mathbb{N}$, $a,b,c\in\mathbb{Z}$, $x,y,z\in\mathbb{R}$ with $a\not=0,b\not=0$ and $c\not=0$,
\begin{eqnarray}\label{eq4.2}
&&(-1)^{m+n}\sum_{r=1}^{|c|}\overline{B}_{m}\biggl(\frac{a(r+x)}{c}-y\biggl)\overline{B}_{n}\biggl(\frac{b(r+x)}{c}-z\biggl)\nonumber\\
&&=nb^{n-1}\mathrm{sgn}(bc)\sum_{j=0}^{m}\binom{m}{j}\frac{(-1)^{j}a^{m-j}}{c^{m+n-j-1}(m+n-j)}\sum_{l=1}^{|b|}\overline{B}_{j}\biggl(\frac{a(l+z)}{b}-y\biggl)\nonumber\\
&&\qquad\times\overline{B}_{m+n-j}\biggl(\frac{c(l+z)}{b}-x\biggl)\nonumber\\
&&\quad+ma^{m-1}\mathrm{sgn}(ac)\sum_{j=0}^{n}\binom{n}{j}\frac{(-1)^{j}b^{n-j}}{c^{m+n-j-1}(m+n-j)}\sum_{l=1}^{|a|}\overline{B}_{j}\biggl(\frac{b(l+y)}{a}-z\biggl)\nonumber\\
&&\qquad\times\overline{B}_{m+n-j}\biggl(\frac{c(l+y)}{a}-x\biggl)\nonumber\\
&&\quad+\frac{(-1)^{m-1}m!n!c(a,b)^{m+n}\mathrm{sgn}(c)\overline{B}_{m+n}\bigl(\frac{az-by}{(a,b)}\bigl)}{a^{n}b^{m}(m+n)!}\nonumber\\
&&\quad-\frac{\delta_{1,m}\delta_{1,n}\mathrm{sgn}(ab)}{4}\sum_{r=1}^{|c|}\delta_{\mathbb{Z}}\biggl(\frac{a(r+x)}{c}-y\biggl)\delta_{\mathbb{Z}}\biggl(\frac{b(r+x)}{c}-z\biggl).
\end{eqnarray}
Since $t+[z]$ runs over a complete residue system modulo $|c|$ as $t$ does, we find that
\begin{eqnarray}\label{eq4.3}
&&\sum_{t=1}^{|c|}\delta_{\mathbb{Z}}\biggl(\frac{a(t+z)}{c}-x\biggl)\delta_{\mathbb{Z}}\biggl(\frac{b(t+z)}{c}-y\biggl)\nonumber\\
&&=\sum_{t=0}^{|c|-1}\delta_{\mathbb{Z}}\biggl(\frac{a(t+\{z\})}{c}-\{x\}\biggl)\delta_{\mathbb{Z}}\biggl(\frac{b(t+\{z\})}{c}-\{y\}\biggl)\nonumber\\
&&=\#\biggl\{t\biggl|\frac{a(t+\{z\})}{c}-\{x\}=r\in\mathbb{Z}, \frac{b(t+\{z\})}{c}-\{y\}=s\in\mathbb{Z},0\leq t\leq |c|-1\biggl\}\nonumber\\
&&=\#\biggl\{t\biggl|\frac{r+\{x\}}{a}=\frac{s+\{y\}}{b}=\frac{t+\{z\}}{c},r,s\in\mathbb{Z},0\leq t\leq |c|-1\biggl\}\nonumber\\
&&=\#\biggl\{t\biggl|0\leq\frac{r+\{x\}}{\mathrm{sgn}(c)a}=\frac{s+\{y\}}{\mathrm{sgn}(c)b}=\frac{t+\{z\}}{\mathrm{sgn}(c)c}<1,r,s,t\in\mathbb{Z}\biggl\},
\end{eqnarray}
where $\#$ denotes the cardinality of a set $S$.
Thus, by replacing $y$ by $x$, $z$ by $y$, $x$ by $z$, and then multiplying both sides of \eqref{eq4.2} by $(-1)^{m+n}$, in view of \eqref{eq4.3}, we get the desired result. This completes the proof of Theorem \ref{thm4.1}.
\end{proof}

It follows that we show some special cases of Theorem \ref{thm4.1}. It is evident from \eqref{eq2.15} and \eqref{eq3.22} that the case $m=n=1$ and $a,b,c\in\mathbb{N}$ in Theorem \ref{thm4.1} gives Berndt's reciprocity formula \eqref{eq1.7}.

For $n\in\mathbb{N}_{0}$, $a,b\in\mathbb{Z}$, $x,y\in\mathbb{R}$ with $b\not=0$, if we set
\begin{equation}\label{eq4.4}
s_{n}(a,b;x,y)=\sum_{r=1}^{|b|}\overline{B}_{1}\biggl(\frac{r+y}{b}\biggl)\overline{B}_{n}\biggl(\frac{a(r+y)}{b}+x\biggl),
\end{equation}
then we deduce from Theorem \ref{thm4.1} the following reciprocity formula.

\begin{corollary}\label{cor4.2} Let $n\in\mathbb{N}_{0}$, $a,b\in\mathbb{N}$. Then, for $x,y\in\mathbb{R}$,
\begin{eqnarray}\label{eq4.5}
&&ab^{n}s_{n}(a,b;x,y)+ba^{n}s_{n}(b,a;y,x)\nonumber\\
&&=-\delta_{1,n}\frac{\delta_{\mathbb{Z}}(x)\delta_{\mathbb{Z}}(y)ab}{4}+\frac{1}{n+1}\sum_{j=0}^{n+1}\binom{n+1}{j}a^{j}b^{n+1-j}\overline{B}_{j}(y)\overline{B}_{n+1-j}(x)\nonumber\\
&&\quad+\frac{n(a,b)^{n+1}\overline{B}_{n+1}\bigl(\frac{ay+bx}{(a,b)}\bigl)}{n+1},
\end{eqnarray}
where $\delta_{\mathbb{Z}}(x)$ is as in \eqref{eq1.5}, $(a,b)$ is as in \eqref{eq1.7}.
\end{corollary}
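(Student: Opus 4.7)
The plan is to recognize $s_n(a,b;x,y)$ of \eqref{eq4.4} as a specialization of the triangle sum \eqref{eq1.13} and then invoke Theorem \ref{thm4.1}. A term-by-term comparison of the summands yields the identity
\[
s_n(a,b;x,y) = s_{1,n}\left(\begin{matrix} 1 & a & b \\ 0 & -x & y \end{matrix}\right),
\]
so I would apply Theorem \ref{thm4.1} with $(m,n) \to (1,n)$ and the substitution $(a,b,c,x,y,z) \to (1,a,b,0,-x,y)$. This produces a single equation whose LHS is $s_n(a,b;x,y)$ and whose RHS decomposes into the structural pieces of \eqref{eq4.1}.

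These pieces would be simplified as follows. The first sum in \eqref{eq4.1} (ranging over $j = 0, 1$) contributes two triangle sums of the form $s_{j,n+1-j}\left(\begin{matrix} 1 & b & a \\ 0 & y & -x \end{matrix}\right)$. For $j = 1$, the substitution $r \mapsto a-r$ combined with \eqref{eq2.15} and periodicity shows this term equals $(-1)^{n+1}s_n(b,a;y,x)$. For $j = 0$, only the second $\overline{B}$-factor is nontrivial, so the sum becomes $\sum_{r=1}^{a}\overline{B}_{n+1}\left(\frac{br - (bx+ay)}{a}\right)$; grouping by $br \bmod a$ (each multiple of $(a,b)$ occurring exactly $(a,b)$ times) followed by the distribution formula \eqref{eq2.21} yields a multiple of $\overline{B}_{n+1}\left(\frac{ay+bx}{(a,b)}\right)$. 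The second sum in \eqref{eq4.1} (over $j = 0,\ldots,n$) involves $s_{j,n+1-j}\left(\begin{matrix} a & b & 1 \\ -x & y & 0 \end{matrix}\right)$; since $|c| = 1$ this collapses to $\overline{B}_j(a+x)\overline{B}_{n+1-j}(b-y) = (-1)^{n+1-j}\overline{B}_j(x)\overline{B}_{n+1-j}(y)$ by periodicity and \eqref{eq2.15}. Finally, the constant term of \eqref{eq4.1} simplifies via $(1,a) = 1$ to $\frac{b\,\overline{B}_{n+1}(x)}{a(n+1)}$, and $\widetilde{N}$ reduces to $\delta_{\mathbb{Z}}(x)\delta_{\mathbb{Z}}(y)$ since the only admissible triple is $(0,x,-y)$.

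Multiplying the resulting equation by $ab^n$ brings the LHS to the required $ab^n s_n(a,b;x,y) + ba^n s_n(b,a;y,x)$. On the RHS, the polynomial contribution scales to $\sum_{j=0}^{n}\binom{n}{j}\frac{a^{n-j+1}b^{j}}{n+1-j}\overline{B}_j(x)\overline{B}_{n+1-j}(y)$, and the scaled constant term becomes $\frac{b^{n+1}\overline{B}_{n+1}(x)}{n+1}$. Reindexing via $j \mapsto n+1-j$ and using the identity $\binom{n}{k-1}/k = \binom{n+1}{k}/(n+1)$, the polynomial contribution rewrites as $\frac{1}{n+1}\sum_{k=1}^{n+1}\binom{n+1}{k}a^{k}b^{n+1-k}\overline{B}_k(y)\overline{B}_{n+1-k}(x)$; appending the scaled constant as the $k = 0$ term completes this to the symmetric sum displayed in \eqref{eq4.5}. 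The main obstacle will be the careful bookkeeping of signs produced by repeated applications of \eqref{eq2.15} inside nested sums, together with the reindexing step that fuses the scaled constant term with the $0 \le j \le n$ binomial sum into one symmetric expression. The edge case $n = 0$ is not covered by Theorem \ref{thm4.1} (both indices there must be in $\mathbb{N}$) but follows directly: \eqref{eq2.22} gives $s_0(a,b;x,y) = \overline{B}_1(y)$, so \eqref{eq4.5} with $n = 0$ becomes the trivial identity $a\overline{B}_1(y) + b\overline{B}_1(x) = b\overline{B}_1(x) + a\overline{B}_1(y)$.
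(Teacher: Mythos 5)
Your proposal is correct and follows essentially the same route as the paper: the paper likewise obtains \eqref{eq4.5} by specializing Theorem \ref{thm4.1} at $m=1$ with the first column set to modulus $1$ and an integer shift (it writes this as ``$m=a=1$, $x\in\mathbb{Z}$'' and relabels $(b,c,y,z)\mapsto(a,b,-x,y)$ afterwards, whereas you relabel up front), then simplifies via \eqref{eq2.15} and \eqref{eq3.22}, multiplies through, and fuses the constant term into the binomial sum. The sign bookkeeping, the identification $\widetilde{N}=\delta_{\mathbb{Z}}(x)\delta_{\mathbb{Z}}(y)$, and the separate elementary treatment of the $n=0$ case all match the paper's argument.
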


\begin{proof}
It is easy from \eqref{eq2.21} to see that Corollary \ref{cor4.2} holds true in the case when $n=0$. We next consider the case $n\geq1$. By taking $m=a=1$ and $x\in\mathbb{Z}$ in Theorem \ref{thm4.1}, in light of \eqref{eq2.15} and \eqref{eq3.22}, we obtain that for $n,b,c\in\mathbb{N}$, $y,z\in\mathbb{R}$,
\begin{eqnarray}\label{eq4.6}
&&s_{n}(b,c;-y,z)-(-1)^{n}\frac{b^{n-1}}{c^{n-1}}s_{n}(c,b;-z,y)\nonumber\\
&&=-\delta_{1,n}\frac{\delta_{\mathbb{Z}}(y)\delta_{\mathbb{Z}}(z)}{4}+\frac{1}{n+1}\sum_{j=0}^{n+1}\binom{n+1}{j}\frac{b^{n-j}\overline{B}_{j}(-y)\overline{B}_{n+1-j}(z)}{c^{n-j}}\nonumber\\
&&\quad+\frac{(-1)^{n+1}n(b,c)^{n+1}\overline{B}_{n+1}\bigl(\frac{cy-bz}{(b,c)}\bigl)}{bc^{n}(n+1)}.
\end{eqnarray}
If we multiply both sides of \eqref{eq4.6} by $bc^{n}$ then we have
\begin{eqnarray}\label{eq4.7}
&&bc^{n}s_{n}(b,c;-y,z)-(-1)^{n}cb^{n}s_{n}(c,b;-z,y)\nonumber\\
&&=-\delta_{1,n}\frac{\delta_{\mathbb{Z}}(y)\delta_{\mathbb{Z}}(z)bc}{4}+\frac{1}{n+1}\sum_{j=0}^{n+1}\binom{n+1}{j}b^{n+1-j}c^{j}\overline{B}_{j}(-y)\overline{B}_{n+1-j}(z)\nonumber\\
&&\quad+\frac{(-1)^{n+1}n(b,c)^{n+1}\overline{B}_{n+1}\bigl(\frac{cy-bz}{(b,c)}\bigl)}{n+1}.
\end{eqnarray}
Therefore, by replacing $b$ by $a$, $c$ by $b$, $y$ by $-x$, $z$ by $y$ in \eqref{eq4.7}, we get the desired result after using \eqref{eq2.15} and the property of residue systems.
\end{proof}

In particular, the case $n=1$ in Corollary \ref{cor4.2} gives the reciprocity formula \eqref{eq3.19}, and the case $(a,b)=1$ in Corollary \ref{cor4.2} is in agreement with Carlitz's reciprocity formula \eqref{eq1.12}.

For $m,n\in\mathbb{N}_{0}$, $a,b,c\in\mathbb{Z}$ with $c\not=0$,  if we set
\begin{equation}\label{eq4.8}
s_{m,n}(a,b,c)=\sum_{r=1}^{|c|}\overline{B}_{m}\biggl(\frac{ar}{c}\biggl)\overline{B}_{n}\biggl(\frac{br}{c}\biggl),
\end{equation}
then we can use Theorem \ref{thm4.1} to remove the hypothesis $(a,b)=(b,c)=(a,c)=1$ in Hall and Wilson's \cite[Theorem 1]{hall1} reciprocity theorem for the sums \eqref{eq4.8}, and obtain the following result.

\begin{corollary}\label{cor4.3} Let $p$ be an odd positive integer, and let $a,b,c\in\mathbb{N}$. Then, for $r\in\mathbb{N}_{0}$ with $0\leq r\leq p-1$,
\begin{eqnarray}\label{eq4.9}
&&\sum_{j=1}^{r+1}\binom{p+1}{j}\binom{p-j}{p-1-r}(-1)^{j+1}a^{p}b^{p+1-j}c^{j}s_{j,p+1-j}(b,c,a)\nonumber\\
&&\quad+\sum_{j=1}^{p-r}\binom{p+1}{j}\binom{p-j}{r}(-1)^{j+1}a^{p+1-j}b^{p}c^{j}s_{j,p+1-j}(a,c,b)\nonumber\\
&&\quad+\binom{p+1}{r+1}a^{r+1}b^{p-r}c^{p}s_{p-r,r+1}(a,b,c)\nonumber\\
&&=\bigg(\binom{p}{r}a^{p+1}(b,c)^{p+1}+\binom{p}{r+1}b^{p+1}(a,c)^{p+1}+(-1)^{r}c^{p+1}(a,b)^{p+1}\biggl)B_{p+1}\nonumber\\
&&\quad-\delta_{1,p}\frac{abc\widehat{N}}{2},
\end{eqnarray}
where $(a,b)$ is as in \eqref{eq1.7}, $\widehat{N}$ is the number of distinct triples $r,s,t\in\mathbb{Z}$ such that
\begin{equation*}
0\leq\frac{r}{a}=\frac{s}{b}=\frac{t}{c}<1.
\end{equation*}.
\end{corollary}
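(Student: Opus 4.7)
The plan is to derive Corollary 4.3 as a direct specialisation of Theorem 4.1. First, I take $x=y=z=0$ and set $(m,n)=(p-r,\,r+1)$, so that $m+n=p+1$; then the left hand side $s_{m,n}\binom{a\,b\,c}{x\,y\,z}$ of (4.1) becomes $s_{p-r,r+1}(a,b,c)$ in the sense of (4.8). With $a,b,c\in\mathbb{N}$ all the $\mathrm{sgn}$-factors collapse to $+1$, and the central argument $(ay-bx)/(a,b)$ reduces to $0$, so that $\overline{B}_{p+1}$ evaluated there equals $B_{p+1}$.

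Next, I multiply both sides of the specialised (4.1) by the normalising factor $\binom{p+1}{r+1}a^{r+1}b^{p-r}c^p$, chosen so that the $s_{p-r,r+1}(a,b,c)$ term acquires precisely the coefficient appearing on the left of (4.9). Using the elementary identities $(r+1)\binom{p+1}{r+1}=(p+1)\binom{p}{r}$ and $(p+1)\binom{p}{r}\binom{p-r}{j}/(p+1-j)=\binom{p+1}{j}\binom{p-j}{r}$, together with the analogous manipulation in which $r$ is replaced by $p-1-r$ for the second sum, the coefficient of $s_{j,p+1-j}(a,c,b)$ reduces to $\binom{p+1}{j}\binom{p-j}{r}(-1)^{p+1-j}a^{p+1-j}b^p c^j$, and that of $s_{j,p+1-j}(b,c,a)$ to $\binom{p+1}{j}\binom{p-j}{p-1-r}(-1)^{p+1-j}a^p b^{p+1-j}c^j$. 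Since $p$ is odd, $(-1)^{p+1-j}=(-1)^j$, so transferring the $j\geq 1$ terms from right to left produces exactly the $(-1)^{j+1}$ signs in (4.9).

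The $j=0$ contribution of each sum is kept on the right hand side and evaluated separately. For this, the key fact is that $s_{0,p+1}(a,c,b)=\sum_{\ell=1}^{b}\overline{B}_{p+1}(c\ell/b)$, which by Raabe's distribution property (2.21) (applied after writing $b=(b,c)b'$, $c=(b,c)c'$ with $(b',c')=1$ and observing that $c\ell/b\equiv c'\ell/b'\pmod 1$ hits each value $k/b'$ exactly $(b,c)$ times as $\ell$ ranges over $1,\dots,b$) collapses to $(b,c)^{p+1}B_{p+1}/b^p$; analogously $s_{0,p+1}(b,c,a)=(a,c)^{p+1}B_{p+1}/a^p$. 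Multiplying through by the normalising factor then produces $\binom{p}{r}a^{p+1}(b,c)^{p+1}B_{p+1}$ and $\binom{p}{r+1}b^{p+1}(a,c)^{p+1}B_{p+1}$, respectively. The central Bernoulli term of (4.1), after multiplication, simplifies to $(-1)^r c^{p+1}(a,b)^{p+1}B_{p+1}$, completing the three Bernoulli contributions on the right of (4.9).

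Finally, the $\delta$-term in (4.1) survives only when $m=n=1$, i.e.\ $r=0$ and $p=1$, which is exactly the support of $\delta_{1,p}$ in (4.9); at $x=y=z=0$ the count $\widetilde{N}$ coincides with $\widehat{N}$, and the normalising factor $\binom{2}{1}abc=2abc$ yields $-abc\widehat{N}/2$. The main obstacle is purely clerical: tracking the binomial identities and the parity simplification $(-1)^{p+1-j}=(-1)^j$ while isolating the $j=0$ contributions. Once these are in place, Corollary 4.3 is nothing more than a re-packaging of (4.1), with the coprimality hypotheses $(a,b)=(b,c)=(a,c)=1$ of Hall and Wilson automatically removed because Theorem 4.1 made no such assumption to begin with.
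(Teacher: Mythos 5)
Your proposal is correct and follows essentially the same route as the paper: specialize Theorem \ref{thm4.1} at integer $x,y,z$ with $(m,n)=(p-r,r+1)$, evaluate the $j=0$ terms via the distribution property to produce the $(b,c)^{p+1}$ and $(a,c)^{p+1}$ Bernoulli contributions, and multiply through by $\binom{p+1}{r+1}a^{r+1}b^{p-r}c^{p}$, using the stated binomial identities and the parity $(-1)^{p+1-j}=(-1)^{j}$ to match the coefficients in \eqref{eq4.9}. The paper merely records the intermediate identity for general $m,n$ before substituting $m=p-r$, $n=r+1$; the content is identical.
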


\begin{proof}
Taking $a,b,c\in \mathbb{N}$ and $x,y,z\in\mathbb{Z}$ in Theorem \ref{thm4.1}, we see from \eqref{eq3.22} that for $m,n,a,b,c\in\mathbb{N}$,
\begin{eqnarray}\label{eq4.10}
&&s_{m,n}(a,b,c)\nonumber\\
&&=(-1)^{n}nb^{n-1}\sum_{j=1}^{m}\binom{m}{j}\frac{(-1)^{m-j}a^{m-j}}{c^{m+n-j-1}(m+n-j)}s_{j,m+n-j}(a,c,b)\nonumber\\
&&\quad+(-1)^{m}ma^{m-1}\sum_{j=1}^{n}\binom{n}{j}\frac{(-1)^{n-j}b^{n-j}}{c^{m+n-j-1}(m+n-j)}s_{j,m+n-j}(b,c,a)\nonumber\\
&&\quad+\frac{(-1)^{m+n}na^{m}(b,c)^{m+n}B_{m+n}}{b^{m}c^{m+n-1}(m+n)}+\frac{(-1)^{m+n}mb^{n}(a,c)^{m+n}B_{m+n}}{a^{n}c^{m+n-1}(m+n)}\nonumber\\
&&\quad+\frac{(-1)^{n-1}m!n!c(a,b)^{m+n}B_{m+n}}{a^{n}b^{m}(m+n)!}-\delta_{1,m}\delta_{1,n}\frac{\widehat{N}}{4}.
\end{eqnarray}
If we replace $m$ by $p-r$, and $n$ by $r+1$ in \eqref{eq4.10}, then we have
\begin{eqnarray}\label{eq4.11}
&&s_{p-r,r+1}(a,b,c)\nonumber\\
&&=(r+1)b^{r}\sum_{j=1}^{p-r}\binom{p-r}{j}\frac{(-1)^{j}a^{p-r-j}}{c^{p-j}(p+1-j)}s_{j,p+1-j}(a,c,b)\nonumber\\
&&\quad+(p-r)a^{p-r-1}\sum_{j=1}^{r+1}\binom{r+1}{j}\frac{(-1)^{j}b^{r+1-j}}{c^{p-j}(p+1-j)}s_{j,p+1-j}(b,c,a)\nonumber\\
&&\quad+\frac{(r+1)a^{p-r}(b,c)^{p+1}B_{p+1}}{b^{p-r}c^{p}(p+1)}+\frac{(p-r)b^{r+1}(a,c)^{p+1}B_{p+1}}{a^{r+1}c^{p}(p+1)}\nonumber\\
&&\quad+\frac{(-1)^{r}(p-r)!(r+1)!c(a,b)^{p+1}B_{p+1}}{a^{r+1}b^{p-r}(p+1)!}-\delta_{1,p-r}\delta_{1,r+1}\frac{\widehat{N}}{4}.
\end{eqnarray}
Now \eqref{eq4.9} follows when multiplying both sides of \eqref{eq4.11} by $\binom{p+1}{r+1}a^{r+1}b^{p-r}c^{p}$. This completes the proof of Corollary \ref{cor4.3}.
\end{proof}

We next use Theorem \ref{thm3.3} to show another reciprocity formula for the generalized Dedekind-Rademacher sums \eqref{eq1.13} as follows.

\begin{theorem}\label{thm4.4} Let $m,n\in\mathbb{N}$, $a,b,c\in\mathbb{Z}$ with $a\not=0,b\not=0$ and $c\not=0$. Then, for $x,y,z\in\mathbb{R}$,
\begin{eqnarray}\label{eq4.12}
&&mas_{m-1,n}\left(
\begin{matrix}
a & b & c \\
x & y & z
\end{matrix}
\right)+nbs_{m,n-1}\left(
\begin{matrix}
a & b & c \\
x & y & z
\end{matrix}
\right)\nonumber\\
&&=(-1)^{n-1}nb^{n-1}\mathrm{sgn}(bc)\sum_{j=0}^{m}\binom{m}{j}\frac{(-1)^{m-j}a^{m-j}}{c^{m+n-j-2}}s_{j,m+n-j-1}\left(
\begin{matrix}
a & c & b \\
x & z & y
\end{matrix}
\right)\nonumber\\
&&\quad+(-1)^{m-1}ma^{m-1}\mathrm{sgn}(ac)\sum_{j=0}^{n}\binom{n}{j}\frac{(-1)^{n-j}b^{n-j}}{c^{m+n-j-2}}s_{j,m+n-j-1}\left(
\begin{matrix}
b & c & a \\
y & z & x
\end{matrix}
\right)\nonumber\\
&&\quad-\mathrm{sgn}(ab)\frac{\delta_{1,m-1}\delta_{1,n}ma+\delta_{1,m}\delta_{1,n-1}nb}{4}\widetilde{N},
\end{eqnarray}
where $\widetilde{N}$ is as in \eqref{eq4.1}.
\end{theorem}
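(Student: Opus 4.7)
The plan is to mimic the derivation of Theorem \ref{thm4.1} from Theorem \ref{thm3.1}, but with Theorem \ref{thm3.3} playing the role of Theorem \ref{thm3.1}. First I would take identity \eqref{eq3.20}, substitute $x\mapsto-(r+x)/c$, and apply $\sum_{r=1}^{|c|}$ to both sides. On the left-hand side, the parity relation \eqref{eq2.15} pulls out an overall factor $(-1)^{m+n-1}$, so the left-hand side collapses to
\begin{equation*}
(-1)^{m+n-1}\bigl[ma\,s_{m-1,n}(a,b,c;y,z,x)+nb\,s_{m,n-1}(a,b,c;y,z,x)\bigr]
\end{equation*}
in the notation of \eqref{eq1.13}. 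Multiplying through by $(-1)^{m+n-1}$ and relabelling $y\to x,\ z\to y,\ x\to z$ then produces the left-hand side of \eqref{eq4.12}.

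Next I would process the two double sums on the right-hand side of \eqref{eq3.20}. After the substitution, the factor $\overline{B}_{m+n-j-1}\bigl(x+(l+z)/b\bigr)$ rewrites as $\overline{B}_{m+n-j-1}\bigl((c(l+z)/b-x-r)/c\bigr)$. Summing over $r=1,\dots,|c|$ and applying \eqref{eq2.15} together with \eqref{eq2.22} introduces a factor $\mathrm{sgn}(c)/c^{m+n-j-2}$ and collapses the $r$-sum to a single Bernoulli function $\overline{B}_{m+n-j-1}(c(l+z)/b-x)$. The surviving inner sum over $l$ is exactly $s_{j,m+n-j-1}(a,c,b;y,x,z)$, which after relabelling equals $s_{j,m+n-j-1}\left(\begin{matrix}a & c & b\\ x & z & y\end{matrix}\right)$. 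The second double sum is handled symmetrically and produces the $s_{j,m+n-j-1}\left(\begin{matrix}b & c & a\\ y & z & x\end{matrix}\right)$ terms. Combining the $(-1)^j$ factor already present in \eqref{eq3.20} with the overall $(-1)^{m+n-1}$ yields $(-1)^{m+n-1+j}=(-1)^{n-1}(-1)^{m-j}$ in the first family and $(-1)^{m-1}(-1)^{n-j}$ in the second, matching \eqref{eq4.12}.

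Third, the $\delta$-function term of \eqref{eq3.20} transforms under the substitution and summation into
\begin{equation*}
-\mathrm{sgn}(ab)\,\frac{\delta_{1,m-1}\delta_{1,n}ma+\delta_{1,m}\delta_{1,n-1}nb}{4}\sum_{r=1}^{|c|}\delta_{\mathbb{Z}}\!\bigl(a(r+x)/c-y\bigr)\delta_{\mathbb{Z}}\!\bigl(b(r+x)/c-z\bigr),
\end{equation*}
and the counting identity \eqref{eq4.3}, borrowed verbatim from the proof of Theorem \ref{thm4.1}, evaluates the $r$-sum as $\widetilde{N}$ after the relabelling. Since the Kronecker deltas force $(m,n)\in\{(2,1),(1,2)\}$, the overall factor $(-1)^{m+n-1}=1$ is harmless, and this produces the last term of \eqref{eq4.12}. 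Assembling the three contributions completes the proof.

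The main obstacle is purely combinatorial: keeping track of the $(-1)$-factors, the $\mathrm{sgn}$-factors, and the cyclic relabelling $(x,y,z)\to(z,x,y)$ that permutes the columns in \eqref{eq1.13}. All of the analytic input---Fourier expansion, Parseval, Hurwitz, and Lerch---is already absorbed into Theorem \ref{thm3.3}. The only structural novelty relative to Theorem \ref{thm4.1} is that the interior Bernoulli index is lowered by one, which is precisely what replaces the denominator $c^{m+n-j-1}(m+n-j)$ of \eqref{eq4.1} by $c^{m+n-j-2}$ in \eqref{eq4.12} and which eliminates any central $\overline{B}_{m+n}$-term, since \eqref{eq3.20}, unlike \eqref{eq3.1}, contains no such term.
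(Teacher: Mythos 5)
Your proposal is correct and follows essentially the same route as the paper: substitute $-(r+x)/c$ for $x$ in \eqref{eq3.20}, sum over $r=1,\dots,|c|$, use \eqref{eq2.15} and \eqref{eq2.22} to collapse the $r$-sum (producing the $\mathrm{sgn}(c)/c^{m+n-j-2}$ factors), evaluate the delta term via \eqref{eq4.3}, and finish with the cyclic relabelling and multiplication by $(-1)^{m+n-1}$. The sign bookkeeping $(-1)^{m+n-1+j}=(-1)^{n-1}(-1)^{m-j}$ matches the paper's intermediate identity \eqref{eq4.13} exactly.
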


\begin{proof}
By substituting $-(r+x)/c$ for $x$, and then making the operation $\sum_{r=1}^{|c|}$ on both sides of \eqref{eq3.20}, we know from \eqref{eq2.15} and \eqref{eq2.22} that for $m,n\in\mathbb{N}$, $a,b,c\in\mathbb{Z}$, $x,y,z\in\mathbb{R}$ with $a\not=0,b\not=0$ and $c\not=0$,
\begin{eqnarray}\label{eq4.13}
&&(-1)^{m+n-1}ma\sum_{r=1}^{|c|}\overline{B}_{m-1}\biggl(\frac{a(r+x)}{c}-y\biggl)\overline{B}_{n}\biggl(\frac{b(r+x)}{c}-z\biggl)\nonumber\\
&&\quad+(-1)^{m+n-1}nb\sum_{r=1}^{|c|}\overline{B}_{m}\biggl(\frac{a(r+x)}{c}-y\biggl)\overline{B}_{n-1}\biggl(\frac{b(r+x)}{c}-z\biggl)\nonumber\\
&&=nb^{n-1}\mathrm{sgn}(bc)\sum_{j=0}^{m}\binom{m}{j}\frac{(-1)^{j}a^{m-j}}{c^{m+n-j-2}}\sum_{l=1}^{|b|}\overline{B}_{j}\biggl(\frac{a(l+z)}{b}-y\biggl)\nonumber\\
&&\qquad\times\overline{B}_{m+n-j-1}\biggl(\frac{c(l+z)}{b}-x\biggl)\nonumber\\
&&\quad+ma^{m-1}\mathrm{sgn}(ac)\sum_{j=0}^{n}\binom{n}{j}\frac{(-1)^{j}b^{n-j}}{c^{m+n-j-2}}\sum_{l=1}^{|a|}\overline{B}_{j}\biggl(\frac{b(l+y)}{a}-z\biggl)\nonumber\\
&&\qquad\times\overline{B}_{m+n-j-1}\biggl(\frac{c(l+y)}{a}-x\biggl)\nonumber\\
&&\quad-\mathrm{sgn}(ab)\frac{\delta_{1,m-1}\delta_{1,n}ma+\delta_{1,m}\delta_{1,n-1}nb}{4}\nonumber\\
&&\qquad\times\sum_{r=1}^{|c|}\delta_{\mathbb{Z}}\biggl(\frac{a(r+x)}{c}-y\biggl)\delta_{\mathbb{Z}}\biggl(\frac{b(r+x)}{c}-z\biggl).
\end{eqnarray}
Therefore, replacing $y$ by $x$, $z$ by $y$, $x$ by $z$, and then multiplying both sides of \eqref{eq4.13} by $(-1)^{m+n-1}$, in view of \eqref{eq4.3}, we obtain \eqref{eq4.12} and finish the proof of Theorem \ref{thm4.4}.
\end{proof}

As a special case of Theorem \ref{thm4.4}, we give the following reciprocity formula for the sums \eqref{eq4.8}.

\begin{corollary}\label{cor4.5} Let $m,n,a,b,c\in\mathbb{N}$. Then
\begin{eqnarray}\label{eq4.14}
&&nb^{n-1}c^{m+1}\sum_{j=0}^{m}\binom{m}{j}(-1)^{m-j}a^{m-j}c^{j-m}s_{j,m+n-j-1}(a,c,b)\nonumber\\
&&\quad-ma^{m-1}c^{n+1}\sum_{j=0}^{n}\binom{n}{j}(-1)^{n-j}b^{n-j}c^{j-n}s_{j,m+n-j-1}(b,c,a)\nonumber\\
&&=nbc^{m+n-1}s_{m,n-1}(a,-b,c)-mac^{m+n-1}s_{n,m-1}(b,-a,c)\nonumber\\
&&\quad-\frac{\delta_{1,m}\delta_{1,n-1}nbc^{2}-\delta_{1,m-1}\delta_{1,n}mac^{2}}{4}\widehat{N},
\end{eqnarray}
where $\widehat{N}$ is as in \eqref{eq4.9}.
\end{corollary}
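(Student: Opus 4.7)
The plan is to specialise Theorem~\ref{thm4.4} to $a,b,c\in\mathbb{N}$ and $x=y=z=0$. Under this choice, every $s_{p,q}(\alpha,\beta,\gamma;0,0,0)$ reduces to $s_{p,q}(\alpha,\beta,\gamma)$ of \eqref{eq4.8}, every $\mathrm{sgn}(\cdot)$ becomes $+1$, and $\widetilde{N}$ collapses to $\widehat{N}$ of \eqref{eq4.9}, since $0\leq(r+0)/a=(s+0)/b=(t+0)/c<1$ is literally the defining condition of $\widehat{N}$. Writing $\Phi_{1},\Phi_{2}$ for the two right-hand sums of \eqref{eq4.12} stripped of their signed prefactors $(-1)^{n-1}$ and $(-1)^{m-1}$, the specialised identity takes the form $ma\,s_{m-1,n}(a,b,c)+nb\,s_{m,n-1}(a,b,c)=(-1)^{n-1}\Phi_{1}+(-1)^{m-1}\Phi_{2}-(\delta_{1,m-1}\delta_{1,n}ma+\delta_{1,m}\delta_{1,n-1}nb)\widehat{N}/4$.

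I would next multiply through by $c^{m+n-1}$. The algebraic identity $c^{m+n-1}/c^{m+n-j-2}=c^{j+1}=c^{m+1}\cdot c^{j-m}=c^{n+1}\cdot c^{j-n}$ matches $(-1)^{n-1}c^{m+n-1}\Phi_{1}$ with the first sum on the left-hand side of \eqref{eq4.14}, and $(-1)^{m-1}c^{m+n-1}\Phi_{2}$ with the negative of the second sum there. On the multiplied left-hand side I would apply \eqref{eq2.15}: since $\overline{B}_{n-1}(-br/c)=(-1)^{n-1}\overline{B}_{n-1}(br/c)$ we have $s_{m,n-1}(a,b,c)=(-1)^{n-1}s_{m,n-1}(a,-b,c)$, and after swapping the two factors in the product $s_{m-1,n}(a,b,c)=s_{n,m-1}(b,a,c)=(-1)^{m-1}s_{n,m-1}(b,-a,c)$. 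Multiplying the whole equation by $(-1)^{n-1}$ and rearranging then isolates the left-hand side of \eqref{eq4.14} on one side, and on the other places the terms $nbc^{m+n-1}s_{m,n-1}(a,-b,c)$, $-mac^{m+n-1}s_{n,m-1}(b,-a,c)$, and a residual delta correction.

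The main obstacle is the sign-and-delta bookkeeping. When $m+n$ is odd, $(-1)^{n-1}=-(-1)^{m-1}$, so the rearrangement goes through without leftover $s_{m,n-1}$ or $s_{m-1,n}$ terms, and the residual delta contribution $-(-1)^{n-1}c^{m+n-1}(\delta_{1,m-1}\delta_{1,n}ma+\delta_{1,m}\delta_{1,n-1}nb)/4$ must agree with $-(\delta_{1,m}\delta_{1,n-1}nbc^{2}-\delta_{1,m-1}\delta_{1,n}mac^{2})/4$; this can be checked by hand in the only two active boundary cases $(m,n)=(2,1)$ and $(m,n)=(1,2)$, where $c^{m+n-1}=c^{2}$ and the signs are verified to align. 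When $m+n$ is even, every $s_{j,m+n-j-1}$ on either side of \eqref{eq4.14} has total index $m+n-1$ odd, and the pairing $r\leftrightarrow|c|-r$ together with $\overline{B}_{k}(-x)=(-1)^{k}\overline{B}_{k}(x)$ forces every such sum to vanish, so both sides of \eqref{eq4.14} reduce to zero and the identity holds trivially.
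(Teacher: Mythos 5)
Your proposal is correct and follows essentially the same route as the paper's proof: specialise Theorem \ref{thm4.4} at integer arguments, multiply through by $\pm c^{m+n-1}$, and use the reflection property $\overline{B}_{k}(-x)=(-1)^{k}\overline{B}_{k}(x)$ together with the vanishing of $s_{p,q}(a,b,c)$ for $p+q$ odd (the paper's \eqref{eq4.15}) to absorb the parity-dependent signs. The only cosmetic difference is your overall factor $(-1)^{n-1}$ versus the paper's $(-1)^{m}$, which coincide in the only nontrivial parity case $m+n$ odd.
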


\begin{proof}
Since for $m,n\in\mathbb{N}_{0}$, $a,b,c\in\mathbb{Z}$ with $c\not=0$,
\begin{equation}\label{eq4.15}
s_{m,n}(a,b,c)=(-1)^{m+n}s_{m,n}(a,b,c),
\end{equation}
by taking $a,b,c\in\mathbb{N}$, $x,y,z\in\mathbb{Z}$, and then multiplying both sides of \eqref{eq4.12} by $(-1)^{m}c^{m+n-1}$, in light of \eqref{eq4.15}, we have
\begin{eqnarray*}
&&-mac^{m+n-1}s_{n,m-1}(b,-a,c)+nbc^{m+n-1}s_{m,n-1}(a,-b,c)\nonumber\\
&&=nb^{n-1}c^{m+1}\sum_{j=0}^{m}\binom{m}{j}(-1)^{m-j}a^{m-j}c^{j-m}s_{j,m+n-j-1}(a,c,b)\nonumber\\
&&\quad-ma^{m-1}c^{n+1}\sum_{j=0}^{n}\binom{n}{j}(-1)^{n-j}b^{n-j}c^{j-n}s_{j,m+n-j-1}(b,c,a)\nonumber\\
&&\quad-\frac{\delta_{1,m-1}\delta_{1,n}mac^{2}-\delta_{1,m}\delta_{1,n-1}nbc^{2}}{4}\widehat{N},
\end{eqnarray*}
as was to be shown.
\end{proof}

It should be noted that an equivalent version of the case $a,b,c$ being three relatively prime positive integers in Corollary \ref{cor4.5} can be found in \cite[Theorem 2]{carlitz4}. In addition, by making similar changes of the variables $m,n$ in Corollary \ref{cor4.5} to the ones in the proof Corollary \ref{cor4.3}, the hypothesis $(a,b)=(b,c)=(a,c)=1$ in Mikol\'{a}s's \cite[Equation (5.5)]{mikolas} and Carlitz's \cite[Equation (3.4)]{carlitz3} reciprocity formulas can be removed.

\section*{Acknowledgements}

This work is supported by the Natural Science Foundation of Sichuan Province (Grant No. 2023NSFSC0065).

\end{document}